\documentclass[11pt]{amsart}
\title[Morgan's mixed Hodge structures and nonabelian Hodge structures]
{Morgan's mixed Hodge structures and nonabelian Hodge structures}

\author{Hisashi Kasuya}

\usepackage{fullpage}
\usepackage{amssymb}
\usepackage{amsmath}
\usepackage{amscd}
\usepackage{amstext}
\usepackage{amsfonts}
\usepackage[all]{xy}

\theoremstyle{plain}
\newtheorem{construction}{Construction}[subsection]
\theoremstyle{plain}

\theoremstyle{plain}

\theoremstyle{plain}

\theoremstyle{plain}
\newtheorem{theorem}{Theorem}[subsection] 
\theoremstyle{remark}
\newtheorem{remark}[theorem]{Remark}
\theoremstyle{remark}
\newtheorem{Important note}[theorem]{Important note}
\theoremstyle{Main result}
\newtheorem{main result}{Main result}
\theoremstyle{lemma}
\newtheorem{lemma}[theorem]{Lemma}
\theoremstyle{definition}
\newtheorem{definition}[theorem]{Definition}
\theoremstyle{proposition}
\newtheorem{proposition}[theorem]{Proposition}
\theoremstyle{corollary}

\theoremstyle{remark}
\newtheorem{example}[theorem]{Example}
\theoremstyle{plain}

\theoremstyle{problem}

\theoremstyle{conclusion}

\newtheorem*{cond(V)}{Condition (V)}

\address[Hisashi Kasuya]{
Department of Mathematics, Graduate School of Science, Osaka University, Osaka,  Japan. }
\email{kasuya@math.sci.osaka-u.ac.jp}

\keywords{mixed Hodge structure on Sullivan's $1$-minimal model, non-abelian Hodge structure, differential graded algebra}
\subjclass[2010]{Primary:	14C30,16E45 , 55P62 Secondary:16B50}

\newcommand{\C}{\mathbb{C}}
\newcommand{\R}{\mathbb{R}}

\newcommand{\g}{\frak{g}}

\newcommand{\V}{\mathcal V}
\newcommand{\W}{\mathcal W}

\begin{document} 

\maketitle
\begin{abstract}
We refine the Morgan's work on  mixed Hodge structures on Sullivan's  $1$-minimal models by using non-abelian Hodge theory.
As an application, we give explicit representatives   of 
real unipotent variations of mixed Hodge structures over compact K\"ahler manifolds.

\end{abstract}

\section{Introduction}

\subsection{Geometric motivation:Non-abelian Hodge structures on compact K\"ahler manifolds}
Let $M$ be a compact K\"ahler manifold.
It is well known that the cohomology of $M$ admits a functorial Hodge structure.
We can explicitly  see the real  Hodge structure on the first cohomology $H^{1}(M,\R)$.
On the de Rham complex $A^{\ast}(M)$ with the bigrading $A^{n}(M)\otimes \C=\bigoplus_{p+q=n} A^{p,q}(M)$, we 
consider the differential operators $d$, $d^{c}$, $\partial$ and $\bar{\partial}$.
On each $A^{n}(M)$, we have  the $dd^{c}$-Lemma:
\[{\rm im}d\cap {\rm ker}d^{c}={\rm ker}d\cap {\rm im}d^{c}={\rm im}dd^{c}
\]
 and  on each $A^{p,q}(M)$,
we have the $\partial\bar\partial$-Lemma:
\[{\rm im}\partial\cap {\rm ker}\bar\partial={\rm ker}\partial\cap {\rm im}\bar\partial={\rm im}\partial\bar\partial
\]
(see \cite{DGMS}).
Applying these relations on $1$-forms, we have 
\[H^{1}(M,\R)\cong {\rm ker}d\cap {\rm ker}d^{c}\qquad {\rm and} \qquad
H^{1}(M,\C)\cong ({\rm ker}d\cap {\rm ker}d^{c})_{\C}={\rm ker}\partial\cap{\rm ker}\bar\partial.
\]
Thus, we can see the  real  Hodge structure on  $H^{1}(M,\R)$ at the level of explicit differential forms.
The purpose of this paper is to extend this observation to non-abelian Hodge theory in terms of \cite{Sim}, \cite{Ara}.

In \cite{Ara}, Arapura formulates non-abelian mixed Hodge structures
 as a refinement of Tannakian considerations of  non-abelian Hodge theory as in \cite{Sim}.
Non-abelian mixed Hodge structures appear as invariants of "pointed" varieties $(M,x)$.
We notice that unlike usual Hodge theory  the dependence on the base point is very important for non-abelian Hodge theory (see \cite[Section 5]{Sim}).
Arapura suggest that variations of mixed Hodge structures  with fiber functors associated with points should be considered as non-abelian mixed Hodge structures.
Our  main geometric progress  of this paper is stated as follows.
"Morgan's mixed Hodge structures associated with pointed compact K\"ahler manifolds $(M,x)$ provide explicit representatives of unipotent variations of mixed Hodge structures over $M$ with fiber functors associated with points $x$."

\subsection{Algebraic theory: Morgan's mixed Hodge structures and nonabelian Hodge structures}
For the above geometric motivation, we study the pure algebraic problem on the  invariance of Morgan's mixed Hodge structures.
In \cite{Del}, Deligne establishes the theory of mixed Hodge complexes.
They are homological algebraic  objects giving functorial  canonical mixed Hodge structures on the cohomology.
In \cite{Mor}, Morgan consideres a multiplicative version of mixed Hodge complexes called mixed Hodge diagrams.
These are algebraic models of the (logarithmic) de Rham complexes of (quasi)-K\"ahler manifolds.
Morgan constructs various non-canonical mixed Hodge structures for   any mixed Hodge diagram so-called Morgan mixed Hodge structures on Sullivan's $1$-minimal model.
The existence of Morgan mixed Hodge structures is  very important progress.
But,  by the  ambiguity of choosing,  
it may be  difficult to consider such mixed Hodge structures themselves as functorial invariants of  mixed Hodge diagrams.
The purpose of this paper is to prove that Morgan mixed Hodge structures of "augmented" mixed Hodge diagrams represent functorial invariants of augmented mixed Hodge diagrams.

Our main invariants  are  non-abelian mixed Hodge structures associated with augmented mixed Hodge diagrams
which are analogues of  corresponding  pointed varieties $(M,x)$ to  unipotent variations of mixed Hodge structures over $M$ with fiber functors associated with points $x$.
Our main progress is to prove that "Morgan mixed Hodge structures   of  augmented mixed Hodge diagrams represent such  non-abelian mixed Hodge structures".

\begin{remark}
In \cite{Mor}, Morgan also constructs mixed Hodge structures on (full) minimal models of "simply connected" mixed Hodge diagrams.
In \cite{Cir} (also \cite{CG}), refining Morgan's construction, it is shown that  mixed Hodge structures on  minimal models can be seen as functorial invariants of  mixed Hodge diagrams without augmentations (see \cite[Theorem 5.14.]{Cir}).
Mixed Hodge structures on (full) minimal models of simply connected mixed Hodge diagrams are very different from 
 mixed Hodge structures on  $1$-minimal model of non-simply connected mixed Hodge diagrams (see \cite{Morco}).
 This is  related to the importance of the dependence of the base point for non-simply connected varieties.
This matter  appears  on the differential graded Lie algebra version of mixed Hodge diagrams (see \cite{Le} and \cite{Le2}).
\end{remark}

 \noindent{\sl Acknowledgments.}
The author would like to  thank  the  anonymous reviewers for their careful reading  and   many valuable comments and suggestions.

\section{Morgan mixed Hodge structures of augmented  $\mathbb K$-mixed-Hodge diagram}
\subsection{A  review of  Morgan mixed Hodge structures}
Let  $\mathbb K$ be a sub-field of $\C$.

An {\em$\mathbb K$-mixed  Hodge structure} on a finite-dimensional $\mathbb K$-vector space $V$ is a pair $(W_{\ast},F^{\ast})$
such that:
\begin{enumerate}
\item $W_{\ast}$ is an increasing filtration on $V$.
\item $F^{\ast}$ is a decreasing filtration  on $V_{\C}$  such that
the filtration on $Gr_{n}^{W} V_{\C}$ induced by $F^{\ast}$ is an $\mathbb K$-Hodge structure of weight $n$.
\end{enumerate}
We call $W_{\ast}$ the weight filtration and $F^{\ast}$ the Hodge filtration.
A morphism of $\mathbb K$-mixed Hodge structures is a $\mathbb K$-linear map which is compatible with the filtrations $W_{\ast}$ and $F^{\ast}$.
This morphism preserves the canonical bigradings and in fact we can say that this morphism is strictly compatible with the filtrations $W_{\ast}$ and $F^{\ast}$.
Consider the category ${\mathcal MHS}_{\mathbb K}$ of  $\mathbb K$-mixed Hodge structures i.e.  objects are finite-dimensional $\mathbb K$-vector spaces with  $\mathbb K$-mixed Hodge structures and morphisms are morphisms of $\mathbb K$-mixed Hodge structures.
It is known that 
${\mathcal MHS}_{\mathbb K}$ is an abelian category (see \cite{Del}).

We review homotopy theory of (graded-commutative) $\mathbb K$-{\em differential graded algebra} (shortly  $\mathbb K$-{\em DGA}).
A $\mathbb K$-DGA $A^{\ast}$ is {\em cohomologically connected } if   $A^{\ast}$ is unital and the unit map ${\mathbb K}\to A^{\ast}$ induces an isomorphism ${\mathbb K}\cong H^{0}(A^{\ast})$.
In this paper, we always assume every $\mathbb K$-DGA is  cohomologically connected.

Let $A^{\ast}$ and $B^{\ast}$ be $\mathbb K$-DGAs.
If  a morphism of graded algebra $\varphi : A^{\ast} \rightarrow B^{\ast}$ satisfies $d\circ \varphi =\varphi \circ d$, we call $\varphi $ a morphism of $\mathbb K$-DGAs.
If a morphism of $\mathbb K$-DGAs induces a cohomology isomorphism, we call it a {\em quasi-isomorphism}.
If a morphism of $\mathbb K$-DGAs induces
 isomorphisms on $0$-th and first cohomology and an injection on the second cohomology, we call it a {\em $1$-quasi-isomorphism}.
Define the $\mathbb K$-DGA $(t,dt)$ as the tensor product of the ring of $\mathbb K$-polynomials on $t$ with the exterior algebra of $\langle dt\rangle$ so that $t$ is of degree $0$, $d(t)=dt$ and $d(dt)=0$.

Let $A^{\ast}$ be a $\mathbb K$-DGA.
We consider the $\mathbb K$-DGA $A^{\ast}\otimes (t,dt)$.
Each element of $A^{n}\otimes (t,dt)$ can be written as
$\sum (a_{i}t^{i}+b_{i}t^{i}dt )$
with $a_{i}\in A^{n}$ and $b_{i}\in A^{n-1}$.
We can define the  "integrations" $\int^{1}_{0}:A^{\ast}\otimes (t,dt)\to A^{\ast}$ and $\int^{t}_{0}:A^{\ast}\otimes (t,dt)\to A^{\ast}\otimes  (t,dt)$ so that
\[\int_{0}^{1}\sum (a_{i}t^{i}+b_{i}t^{i}dt )=(-1)^{n-1}\sum \frac{b_{i}}{i+1}\qquad
{\rm and}\qquad
\int_{0}^{t}\sum (a_{i}t^{i}+b_{i}t^{i}dt )=(-1)^{n-1}\sum \frac{b_{i} t^{i+1}}{i+1}.
\]

For  two morphisms  $\phi_{0}$ and $\phi_{1}$  from  $A^{\ast}$ to a $\mathbb K$-DGA $B^{\ast}$,
 a homotopy from $\phi_{0}$ to  $\phi_{1}$ is a morphism $H:A^{\ast}\to B^{\ast}\otimes  (t,dt)$ so that 
 for any $x\in A^{\ast}$ we have  
\[H(x)\vert_{t=0}= \phi_{0}(x)\,\,\,\,\,\,
{\rm and}\,\,\,\,\,\,  H(x)\vert_{t=1}= \phi_{1}(x).\]
See \cite[Chapter 11]{GMo} for basic  properties of homotopies and integrations.

A $\mathbb K$-DGA $\mathcal M^{\ast}$  is {\em $1$-minimal} if $\mathcal M^{\ast}=\bigcup\mathcal M^{\ast}_{i}$ for a sequence of sub-DGAs
\[{\mathbb K}=\mathcal M^{\ast}_{0}\subset \mathcal M^{\ast}_{1}\subset \mathcal M^{\ast}_{2}\subset\dots
\]
such that  $\mathcal M_{i+1}^{\ast}=\mathcal M_{i}^{\ast}\otimes \bigwedge \V_{i+1}$,
$\V_{i+1}$ is a graded vector space of degree $1$ and $d\V_{i+1}\subset   \mathcal M_{i}^{2}$.
For each $i$, we have 
\[\mathcal M_{i}^{\ast}=\bigwedge ( \V_{1}\oplus \dots \oplus \V_{i}). 
\]
The dual space  ${\frak n}_{i}=\V^{\ast}_{1}\oplus \dots \oplus \V^{\ast}_{i}$ is a Lie algebra.
We can easily check that this Lie algebra is nilpotent.
We consider the pronilpotent Lie algebra $\frak n= \varprojlim{\frak n}_{i}$.
We call this the {\em dual Lie algebra} of a $1$-minimal $\mathbb K$-DGA $\mathcal M^{\ast}$.

\begin{definition}
For a $\mathbb K$-DGA $A^{\ast}$, a  {\em $1$-minimal model} of $A^{\ast}$ is a 
 $1$-minimal DGA $\mathcal M^{\ast}$ admitting a $1$-quasi-isomorphism $\phi: \mathcal M^{\ast}\to A^{\ast}$.
\end{definition}

\begin{theorem}[\cite{DGMS,GMo}]\label{exunis}
For any (cohomologically connected) $\mathbb K$-DGA  $A^{\ast}$, a $1$-minimal model of $A^{\ast}$ exists and it is unique up to isomorphism of $\mathbb K$-DGA.
More precisely, for any two $1$-minimal models $\mathcal M^{\ast}$ and $\mathcal N^{\ast}$ with $1$-quasi-isomorphisms $\phi:{\mathcal M}^{\ast}\to A^{\ast}$ and $\psi:{\mathcal N}^{\ast}\to A^{\ast}$,
there exists an isomorphism ${\mathcal I}:{\mathcal M}^{\ast}\to {\mathcal N}^{\ast}$ and a homotopy ${\mathcal H}:{\mathcal M}^{\ast}\to A^{\ast}\otimes (t, dt)$ from $\psi\circ {\mathcal I}$ to $\phi$.
\end{theorem}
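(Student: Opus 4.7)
My plan is to prove existence and uniqueness separately, each by induction on the filtration degree of the $1$-minimal model.

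\textbf{Existence.} I would construct $\mathcal M^\ast=\bigcup \mathcal M_i^\ast$ inductively. Starting from $\mathcal M_0^\ast=\mathbb K$, I build $\V_1$ as a $\mathbb K$-vector space isomorphic to $H^1(A^\ast)$ with $d|_{\V_1}=0$, and define $\phi_1:\mathcal M_1^\ast=\bigwedge \V_1\to A^\ast$ by choosing, for each basis element of $\V_1$, a cocycle in $Z^1(A^\ast)$ representing the corresponding class. At stage $i+1$, I examine $K_i=\ker\bigl(\phi_i^\ast:H^2(\mathcal M_i^\ast)\to H^2(A^\ast)\bigr)$, let $\V_{i+1}$ be a vector space indexed by a basis of $K_i$, set $d|_{\V_{i+1}}:\V_{i+1}\to Z^2(\mathcal M_i^\ast)$ by sending each generator to a chosen cocycle representative, and extend $\phi_{i+1}$ by choosing primitives in $A^1$ for the images of these cocycles under $\phi_i$ (these exist because the classes lie in $K_i$). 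The cohomological connectedness of $A^\ast$ ensures the construction is consistent. One then checks by the usual induction that $\phi^\ast$ is an isomorphism on $H^1$ (true at stage $1$ and preserved since subsequent generators carry nontrivial differentials, so contribute no new closed elements in degree $1$) and is injective on $H^2$ in the colimit (any class in the kernel must come from some finite stage $i$, where it is killed at stage $i+1$).

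\textbf{Uniqueness and the homotopy.} Given $\phi:\mathcal M^\ast\to A^\ast$ and $\psi:\mathcal N^\ast\to A^\ast$ two $1$-minimal models, I build $\mathcal I:\mathcal M^\ast\to\mathcal N^\ast$ and $\mathcal H:\mathcal M^\ast\to A^\ast\otimes(t,dt)$ compatibly, stage by stage. For stage $1$: for each $v\in\V_1$, the class $[\phi(v)]\in H^1(A^\ast)$ lifts via the isomorphism $\psi^\ast:H^1(\mathcal N^\ast)\to H^1(A^\ast)$ to a class represented by some $\mathcal I_1(v)\in Z^1(\mathcal N^\ast)$; then $\psi(\mathcal I_1(v))-\phi(v)=d\eta_v$ for some $\eta_v\in A^0$, and setting $\mathcal H_1(v)=\phi(v)+t\cdot(\psi(\mathcal I_1(v))-\phi(v))-d(t\eta_v)$ (i.e.\ correcting by an exact term in the DGA $A^\ast\otimes(t,dt)$ so that $\mathcal H_1$ becomes a DGA morphism) gives a homotopy from $\phi|_{\mathcal M_1^\ast}$ to $\psi\circ\mathcal I_1$. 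Inductively, suppose $\mathcal I_i$ and $\mathcal H_i$ are defined on $\mathcal M_i^\ast$ and consider $v\in\V_{i+1}$. Since $dv\in\mathcal M_i^2$ is closed and $\phi(dv)=d\phi(v)$ is exact in $A^\ast$, the homotopy forces $[\psi\mathcal I_i(dv)]=0$ in $H^2(A^\ast)$; then the $1$-quasi-isomorphism property of $\psi$ (injectivity on $H^2$) implies $\mathcal I_i(dv)$ is exact in $\mathcal N^\ast$, so I can choose $\mathcal I_{i+1}(v)\in\mathcal N^1$ with $d\mathcal I_{i+1}(v)=\mathcal I_i(dv)$. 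Extending $\mathcal H$ amounts to producing an element of $A^1\otimes(t,dt)$ with prescribed differential and boundary values, which is done by an explicit formula using $\int_0^t$ together with the element $\phi(v)+t\bigl(\psi\mathcal I_{i+1}(v)-\phi(v)\bigr)$ corrected by an exact term, arranging $\mathcal H_{i+1}(v)|_{t=0}=\phi(v)$ and $\mathcal H_{i+1}(v)|_{t=1}=\psi\mathcal I_{i+1}(v)$. Taking the colimit yields $\mathcal I$ and $\mathcal H$.

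\textbf{Why $\mathcal I$ is an isomorphism.} The map $\mathcal I$ is a $1$-quasi-isomorphism because $\psi\circ\mathcal I$ is homotopic to $\phi$ (so induces the same maps on $H^0,H^1$ and on the cokernel of $H^2$) and $\psi$ itself is a $1$-quasi-isomorphism. A $1$-quasi-isomorphism between $1$-minimal DGAs is automatically an isomorphism: one shows inductively that $\mathcal I$ sends $\V_i\subset\mathcal M^\ast$ isomorphically onto a complement of $\mathcal N_{i-1}^\ast$-decomposables in $\mathcal N_i^\ast$, by comparing the indecomposable quotients, which for $1$-minimal DGAs coincide with $H^1$ in the appropriate sense after the first stage.

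\textbf{Main obstacle.} The delicate point is the extension of the homotopy $\mathcal H_i$ to $\mathcal H_{i+1}$: one must simultaneously extend the DGA map $\mathcal I$ \emph{and} produce a homotopy that is a DGA morphism into $A^\ast\otimes(t,dt)$ with correct endpoint values. This requires combining the homotopy invariance of cohomology (to guarantee the obstruction $\mathcal I_i(dv)$ is exact in $\mathcal N^\ast$) with the explicit use of the integration operators $\int_0^1$ and $\int_0^t$ to transfer boundary data into bulk primitives, and it is precisely here that the hypotheses that $\psi$ is a $1$-quasi-isomorphism and that all new generators sit in degree $1$ are used in an essential way.
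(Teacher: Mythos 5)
You should first note that the paper itself gives no proof of Theorem \ref{exunis}: it is quoted from \cite{DGMS,GMo}, and the closest in-paper material is the description of the canonical sequence just after the statement and the inductive construction of ${\mathcal I}$ and ${\mathcal H}$ in Subsection \ref{IHH} (in the filtered setting). Your outline follows exactly that standard route: existence via the canonical inductive construction of $\bigwedge(\V_1\oplus\cdots\oplus\V_i)$, uniqueness via an obstruction-theoretic induction that builds ${\mathcal I}$ and the homotopy together, and finally the standard fact that a $1$-quasi-isomorphism between $1$-minimal DGAs is an isomorphism. So the strategy is the right one and agrees with the cited sources; the existence half and the isomorphism argument are fine at sketch level.

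There are, however, two places where the uniqueness induction as you wrote it would not go through. (a) The stage-one formula is off: $\phi(v)+t\bigl(\psi{\mathcal I}_1(v)-\phi(v)\bigr)-d(t\eta_v)$ equals $\phi(v)-\eta_v\,dt$, whose values at $t=0$ and $t=1$ are both $\phi(v)$, so it is not a homotopy to $\psi{\mathcal I}_1$; the correct choice is ${\mathcal H}_1(v)=\phi(v)+d(t\eta_v)=\phi(v)+t\,d\eta_v+\eta_v\,dt$, which is closed and has endpoints $\phi(v)$ and $\psi{\mathcal I}_1(v)$ (then reparametrize $t\mapsto 1-t$ to match the orientation ``from $\psi\circ{\mathcal I}$ to $\phi$'' in the statement). (b) More substantially, in the inductive step the endpoint defect cannot in general be removed by ``correcting by an exact term.'' Setting $\tilde{\mathcal H}(v)=\psi{\mathcal I}_{i+1}(v)+\int_0^t{\mathcal H}_i(dv)$ one gets the right differential and the right value at one endpoint, but the discrepancy $\epsilon=\phi(v)-\psi{\mathcal I}_{i+1}(v)-\int_0^1{\mathcal H}_i(dv)\in A^1$ is only \emph{closed}; if it is not exact, no exact correction fixes the other endpoint. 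The missing move is to exploit the remaining freedom in ${\mathcal I}_{i+1}(v)$: a primitive of ${\mathcal I}_i(dv)$ is determined only up to a closed element $z\in{\mathcal N}^1$, and since $\psi^{\ast}:H^1({\mathcal N}^{\ast})\to H^1(A^{\ast})$ is surjective one can choose $z$ with $[\psi(z)]=[\epsilon]$, after which the new discrepancy is exact and your correction applies. This is precisely the step the paper performs in Subsection \ref{IHH}, where $a_v$ is re-chosen so that $\psi(v)-\iota\circ\phi(a_v)+\int_0^1{\mathcal H}(dv)$ is exact. You correctly single out the simultaneous extension of ${\mathcal I}$ and ${\mathcal H}$ as the delicate point, but the argument needs this explicit use of surjectivity on $H^1$ (in addition to the injectivity on $H^2$ you do invoke) to close the induction.
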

By the uniqueness, we can say that if there exists a $1$-quasi-isomorphism between two   $\mathbb K$-DGAs  $A^{\ast}$ and $B^{\ast}$, then they have the same $1$-minimal model.
Without the homotopy argument, a map $\phi: \mathcal M^{\ast}\to A^{\ast}$ is not unique.

Let $A^{\ast}$ be a $\mathbb K$-DGA
 and $\mathcal M^{\ast}$  the $1$-minimal model of $A^{\ast}$  with a $1$-quasi-isomorphism $\phi:\mathcal M^{\ast}\to A^{\ast}$.
Then there is  a {\em canonical sequence} for $\phi:\mathcal M^{\ast}\to A^{\ast}$:
\[{\mathcal M}^{\ast}_{1}\subset {\mathcal M}^{\ast}_{2}\subset\dots 
\]
such that:
\begin{enumerate}
\item $\mathcal M^{\ast}=\bigcup_{i=1}^{\infty} \mathcal M^{\ast}_{i}$.
\item ${\mathcal M}^{\ast}_{1}=\bigwedge {\mathcal V}_{1}$ with the trivial differential such that $\phi$ induces an isomorphism 
${\mathcal V}_{1}\to H^{1}(A^{\ast})$.
\item Consider the map $\phi_{1}:   {\mathcal V}_{1}\wedge {\mathcal V}_{1}\to H^{2}(A^{\ast})$ induced by $\phi$.
We have
${\mathcal M}^{\ast}_{2}={\mathcal M}^{\ast}_{1}\otimes \bigwedge {\mathcal V}_{2}$ such that the differential $d$ induces an isomorphism ${\mathcal V}_{2}\to {\rm Ker}\, \phi_{1}$.
\item Consider the map $\phi_{n}:   H^{2}({\mathcal M}^{\ast}_{n})\to H^{2}(A^{\ast})$ induced by $\phi$ for each integer $n$.
We have
${\mathcal M}^{\ast}_{n+1}={\mathcal M}^{\ast}_{n}\otimes \bigwedge {\mathcal V}_{n+1}$ such that the differential $d:{\mathcal V}_{n+1}\to {\mathcal M}^{2}_{n}$ induces an isomorphism ${\mathcal V}_{n+1}\to {\rm Ker}\, \phi_{n}$.

\end{enumerate}

A {\em filtered $\mathbb K$-DGA}  is a DGA $A^{*}$ with a increasing filtration $W_{*}$ so that the differential and the multiplication are compatible with $W_{*}$.
A {\em bifiltered $\mathbb K$-DGA}  is a DGA $A^{*}$ with a increasing filtration $W_{*}$ and a decreasing filtration $F^{\ast}$ so that the differential and the multiplication are compatible with $W_{*}$ and $F^{\ast}$.
For a filtered $\mathbb K$-DGA $(A^{\ast},W_{\ast})$, we define the filtration $W_{*}$ on $A^{\ast}\otimes (t,dt)$ by
$W_{i}(A^{\ast}\otimes (t,dt))=W_{i}(A^{\ast})\otimes (t,dt)$.

A {\em filtered  minimal $\mathbb K$-DGA} is a minimal $\mathbb K$-DGA ${\mathcal M}^{\ast}$ with a increasing filtration $W_{*}$ so that   the differential and the multiplication are strictly  compatible with $W_{*}$.
A {\em filtered  minimal model} of filtered $\mathbb K$-DGA $(A^{\ast},W_{\ast})$ is a filtered  minimal $\mathbb K$-DGA 
$({\mathcal M}^{\ast},W_{\ast})$ admitting a $1$-quasi-isomorphism $\phi: {\mathcal M}^{\ast}\to A^{\ast}$ which is compatible with the filtrations $W_{\ast}$.

\begin{definition}[{\cite[Definition 3.5]{Mor}}]\label{MOMHD}
An $\mathbb K$-{\em mixed-Hodge diagram} ${\mathcal D}=\{(A^{*}, W_{*})\to^{\iota} (B^{*}, W_{*},F^{*})\}$ is a pair of filtered $\mathbb K$-DGA $(A^{*}, W_{*})$ and bifiltered $\C$-DGA $(B^{*}, W_{*},F^{*})$  and filtered DGA map $\iota:(A^{*}_{\C},W_{*})\to (B^{*},W_{*})$  such that:
\begin{enumerate}
\item $\iota$  induces an isomorphism  $\iota^{*}:\,_{W}E^{*,*}_{1}(A^{*}_{\C})\to \,_{W}E^{*,*}_{1}(B^{*})$ where $ \,_{W}E_{*}^{*,*}(\cdot)$ is the spectral sequence for the decreasing filtration $W^{*}=W_{-*}$.
\item The differential $d_{0}$ on $\,_{W}E^{*,*}_{0}(B^{*})$ is strictly compatible with the filtration induced by $F$.
\item The filtration on $\,_{W}E_{1}^{p,q}(B^{*})$ induced by $F$ is an $\mathbb K$-Hodge structure of weight $q$ on $\iota^{*}(\,_{W}E^{*,*}_{1}(A^{*}))$.

\item $W_{-1}(A^{\ast})=0$ and $W_{-1}(B^{\ast})=0$.
\end{enumerate}
\end{definition}
\begin{remark}
This definition is more strict than the definition in \cite{Cir}.
In \cite{Cir}, a mixed-Hodge diagram is defined by a string of quasi-isomorphisms and the non-negativity of weight filtrations  (the fourth condition)  is not assumed.
The main purpose  of this paper is to study real  non-abelian mixed Hodge structures of smooth varieties.
Our definition is suitable for real mixed Hodge theory on smooth varieties (see Section \ref{EXMHD} and Section \ref{cpkah}).
But, it may not be  so for  singular  varieties.

\end{remark}

For a $\mathbb K$-mixed-Hodge diagram ${\mathcal D}=\{(A^{*}, W_{*})\to^{\iota} (B^{*}, W_{*},F^{*})\}$, it is known that the spectral sequence $\,_{F}E^{*,*}_{r}(B^{*})$ for the filtration $F^{\ast}$ degenerates at $E_{1}$-term and it implies that 
the differential $d$ on $B^{*}$ is strictly compatible with the filtration $F^{\ast}$ (see \cite{Del, Mor}).
It is also known that the spectral sequence $\,_{W}E^{*,*}_{r}(A^{*})$ for the filtration $W^{\ast}$ degenerates at $E_{2}$-term.
Define the filtration $W^{\prime}_{*}$ on $A^{*}$ (resp. $B^{*}$)
as $W^{\prime}_{i}(A^{r})=\{x\in W_{i-r}(A^{r})\vert dx\in  W_{i-r-1}(A^{r+1})\}$ (resp.\ $W^{\prime}_{i}(B^{r})=\{x\in W_{i-r}(B^{r})\vert dx\in  W_{i-r-1}(B^{r+1})\}$).
The $E_{2}$-degeneration of $\,_{W}E^{*,*}_{r}(A^{*})$ (resp. $\,_{W}E^{*,*}_{r}(B^{*})$) implies the $E_{1}$-degeneration of the spectral sequence associated with the filtration  $W^{\prime}$.
Hence, the differential $d$ on $A^{*}$ (resp. $B^{*}$) is strictly compatible with the filtration $W^{\prime}$.

\begin{theorem}[{\cite[Theorem 4.3]{Mor}}]\label{midimi}
The filtrations $W^{\prime}_{*}$ and $F^{*}$ induce an $\mathbb K$-mixed Hodge structure  on 
$H^{r}(A^{*})$ via the isomorphism $\iota^{\ast}:H^{r}(A^{*}_{\C})\to H^{r}(B^{*})$.
\end{theorem}

Now we explain Morgan's constructions of mixed Hodge structures on $1$-minimal models.
A {\em bigraded $1$-minimal $\mathbb K$-DGA} is a $1$-minimal $\mathbb K$-DGA $\mathcal N$ with a bigrading 
\[\mathcal N=\bigoplus_{0\le P,Q}({\mathcal N}^{\ast})^{P,Q} 
\]
such that  $({\mathcal N}^{\ast})^{0,0}= {\mathcal N}^{0}=\mathbb K$ and the differential and the multiplication are of type $(0,0)$.
For a bigraded minimal $\mathbb K$-DGA ${\mathcal N}^{\ast}=\bigoplus_{0\le P,Q}({\mathcal N}^{\ast})^{P,Q} $, 
we define the filtrations $W_{\ast}$ and $F^{\ast}$ by
\[W_{i}({\mathcal N}^{\ast})=\bigoplus_{P+Q\le i}({\mathcal N}^{\ast})^{P,Q}, \qquad {\rm and}\qquad F^{r}({\mathcal N}^{\ast})=\bigoplus_{P\ge r}({\mathcal N}^{\ast})^{P,Q}.
\]

\begin{definition}\label{bigmi}
Let ${\mathcal D}=\{(A^{*}, W_{*})\to^{\iota}(B^{*}, W_{*},F^{*})\}$ be an $\mathbb K$-mixed-Hodge diagram.
A bigraded minimal model of ${\mathcal D}$ is a bigraded $1$-minimal $\C$-DGA ${\mathcal N}^{\ast}=\bigoplus_{0\le P,Q}({\mathcal N}^{\ast})^{P,Q} $ which admits a (not necessarily  commutative) diagram:
\[\xymatrix{
B^{\ast}&\ar[l]^{\iota}A^{\ast}_{\C}\ar[r]^{\bar\iota}&\overline{B^{\ast}}\\
&{\mathcal N}^{\ast}\ar[lu]^{\psi} \ar[u]^{\psi^{\prime\prime}}\ar[ru]_{\psi^{\prime}}
}
\]
where $\overline{B^{\ast}}$ is the $\C$-DGA $B^{\ast}$ with the opposite complex structure
and homotopies $ H: {\mathcal N}^{\ast}\to B^{\ast}\otimes (t,dt)$ and  $ H^{\prime}: {\mathcal N}^{\ast}\to \overline{B^{\ast}}\otimes (t,dt)$ from $\iota\circ \psi^{\prime\prime}$ to $\psi$ and $\bar\iota\circ \psi^{\prime\prime}$ to $\psi^{\prime}$
such that for any $(P,Q)$:
\begin{enumerate}
\item  $\psi^{\prime\prime}(({\mathcal N}^{\ast})^{P,Q})\subset W^{\prime}_{P+Q}(A^{\ast}_{\C}),\qquad\qquad
\psi(({\mathcal N}^{\ast})^{P,Q})\subset W_{P+Q}^{\prime}(B^{\ast})\cap F^{P}(B^{\ast}) $
 and  \[
\psi^{\prime}(({\mathcal N}^{\ast})^{P,Q})\subset W^{\prime}_{P+Q}(\overline{B^{\ast}})\cap \overline{F^{Q}(\overline{B^{\ast}})}+\sum_{i\ge 2} W^{\prime}_{P+Q-i}(\overline{B^{\ast}})\cap \overline{F^{Q-i+1}(\overline{B^{\ast}})}\]
\item ${\mathcal H}(({\mathcal N}^{\ast})^{P,Q})\subset W^{\prime}_{P+Q}(B^{\ast}\otimes (t,dt))\qquad {\rm and}  \qquad{\mathcal H}^{\prime}(({\mathcal N}^{\ast})^{P,Q})\subset W^{\prime}_{P+Q}(B^{\ast}\otimes (t,dt))$.
\end{enumerate}

\end{definition}

We notice that $({\mathcal N}^{\ast},W_{\ast})$ is a filtered $1$-minimal model of $(B^{\ast},W^{\prime}_{\ast})$.

\begin{theorem}[\cite{Mor}]\label{Morgan}
Let ${\mathcal D}=\{(A^{*}, W_{*})\to^{\iota}(B^{*}, W_{*},F^{*})\}$ be an $\mathbb K$-mixed-Hodge diagram.
Then:
\begin{enumerate}
\item There exists a filtered $1$-minimal model $({\mathcal M}^{\ast},W_{\ast})$ of the filtered $\mathbb K$-DGA $(A^{\ast},W^{\prime}_{\ast})$ and it is unique up to isomorphism of filtered $\mathbb K$-DGAs
\item There exists a bigraded $1$-minimal model ${\mathcal N}^{\ast}=\bigoplus_{0\le P,Q}({\mathcal N}^{\ast})^{P,Q} $ of ${\mathcal D}$ and it is unique up to isomorphism of bigraded $\C$-DGA.

\item For any $1$-quasi-isomorphism $\phi:{\mathcal M}^{\ast}\to A^{\ast}$  which is compatible with the filtrations $W_{\ast}$ and $W^{\prime}_{\ast}$  and any diagram
 \[\xymatrix{
B^{\ast}&\ar[l]_{\iota}A^{\ast}_{\C}\ar[r]^{\bar\iota}&\overline{B^{\ast}}\\
&{\mathcal N}^{\ast}\ar[lu]^{\psi} \ar[u]^{\psi^{\prime\prime}}\ar[ru]_{\psi^{\prime}}
}
\]
as in Definition \ref{bigmi}, there exists a isomorphism ${\mathcal I}: ({\mathcal N}^{\ast}, W_{\ast})\to ({\mathcal M}^{\ast}, W_{\ast})$ of filtered DGAs and a homotopy ${\mathcal H}:{\mathcal N}^{\ast}\to B^{\ast}\otimes (t,dt)$ which is compatible with the filtrations $W_{\ast}$ and $W^{\prime}_{\ast}$ from $\psi$ to $\iota\circ \phi\circ \mathcal I$.

\item For any isomorphism ${\mathcal I}: ({\mathcal N}^{\ast}, W_{\ast})\to ({\mathcal M}^{\ast},W_{\ast})$ as in the above sentence, 
defining the filtration $F^{\ast}$ on ${\mathcal  M}^{\ast}_{\C}$ by $F^{r}({\mathcal M}^{\ast}_{\C})={\mathcal I}(F^{r}({\mathcal N}^{\ast}))$, $(W_{\ast}, F^{\ast})$ is a $\mathbb K$-mixed-Hodge structure on ${\mathcal  M}^{\ast}$.

\end{enumerate}

\end{theorem}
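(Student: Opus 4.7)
The strategy is to build the filtered $1$-minimal model $\mathcal{M}^*$ and the bigraded $1$-minimal model $\mathcal{N}^*$ in parallel via the canonical sequence of Section \ref{DGSU}, so that parts (1)–(3) follow from a filtered, respectively bigraded, version of the standard obstruction-theoretic uniqueness argument, and then to transport the bigrading of $\mathcal{N}^*$ through $\mathcal{I}$ and invoke Proposition \ref{spmix} for part (4).

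For (1) and (2), I would construct $\mathcal{M}^*_{i+1}=\mathcal{M}^*_i\otimes\bigwedge\mathcal{V}_{i+1}$ inductively, with $\mathcal{V}_{i+1}$ realised as a lift of the kernel $\ker\phi_i$ appearing in the canonical sequence. At each step the controlling space is a subquotient of $H^*(A^*_\C)$ (resp.\ $H^*(B^*)$), which by Theorem \ref{midimi} carries a $\mathbb{K}$-mixed Hodge structure. For $(\mathcal{M}^*,W_*)$ the lift $\mathcal{V}_{i+1}$ is chosen as a $W$-filtered splitting, and strictness of morphisms of mixed Hodge structures (Proposition \ref{mixab}) gives both existence and uniqueness up to filtered isomorphism. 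For the bigraded model I would choose $\mathcal{V}_{i+1}$ as a direct sum of the canonical bigrading pieces $V^{P,Q}$ of Proposition \ref{BIGG}, so that $\mathcal{N}^*$ inherits a bigrading with differential and product of type $(0,0)$. The maps $\psi,\psi^\prime,\psi^{\prime\prime}$ and the homotopies $H,H^\prime$ are then built in tandem by choosing bigraded lifts compatible with the Hodge conditions on $_WE_1^{*,*}$, together with a standard homotopy-extension argument over $B^*\otimes(t,dt)$.

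For (3), both $\phi:\mathcal{M}^*\to A^*$ and $\psi^{\prime\prime}:\mathcal{N}^*\to A^*_\C$ present filtered $1$-minimal models of $(A^*_\C,W^\prime_*)$, so the filtered analogue of Theorem \ref{exunis} produces the filtered isomorphism $\mathcal{I}:(\mathcal{N}^*,W_*)\to(\mathcal{M}^*,W_*)$ together with a filtered homotopy from $\psi^{\prime\prime}$ to $\phi\circ\mathcal{I}$. Postcomposing with $\iota$ and concatenating in the $t$-direction with the homotopy $H$ of Definition \ref{bigmi} yields the required filtered homotopy $\mathcal{H}$ from $\psi$ to $\iota\circ\phi\circ\mathcal{I}$.

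For (4) — the technical heart — set $(\mathcal{M}^*_\C)^{P,Q}:=\mathcal{I}((\mathcal{N}^*)^{P,Q})$. Because $\mathcal{I}$ is filtered, the identities $W_i(\mathcal{M}^*_\C)=\bigoplus_{P+Q\le i}(\mathcal{M}^*_\C)^{P,Q}$ and $F^r(\mathcal{M}^*_\C)=\bigoplus_{P\ge r}(\mathcal{M}^*_\C)^{P,Q}$ hold tautologically. Proposition \ref{spmix} then reduces the claim to checking
\[
\overline{(\mathcal{M}^*_\C)^{P,Q}}\equiv(\mathcal{M}^*_\C)^{Q,P}\pmod{\bigoplus_{R+S<P+Q}(\mathcal{M}^*_\C)^{R,S}}
\]
together with the requirement that $\bigoplus_{P+Q\ge n}(\mathcal{M}^*_\C)^{P,Q}$ descends to a $\mathbb{K}$-subspace. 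This is where the conjugate map $\psi^\prime$ and the homotopy $H^\prime$ of Definition \ref{bigmi} enter: one compares, inductively along the canonical sequence and modulo lower weights, the images of $(\mathcal{N}^*)^{P,Q}$ under $\psi$ and $\psi^\prime$, and transfers the comparison back through $\mathcal{I}$ to $\mathcal{M}^*_\C$. The main obstacle is controlling the lower-weight error terms in this transfer — the reason the diagram in Definition \ref{bigmi} is only commutative up to filtered homotopy rather than strictly — and it is exactly here that the integrations $\int_0^1$ on $B^*\otimes(t,dt)$ are used to extract the leading-order comparison while absorbing the error into terms of strictly smaller total weight.
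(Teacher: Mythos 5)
There is an important mismatch of expectations here: the paper does not prove Theorem \ref{Morgan} at all. It is imported verbatim from Morgan \cite{Mor} (parts (1)--(2) are his existence/uniqueness results for filtered and bigraded $1$-minimal models, (3) is essentially his Proposition 7.5, (4) his theorem on the mixed Hodge structure of the minimal model), and the only related material inside the paper is Subsection \ref{IHH}, where, under the extra hypothesis that $d_{\vert B^{0}}$ is strictly compatible with $W_{\ast}$ and $F^{\ast}$, a \emph{canonical} choice of ${\mathcal I}$ and ${\mathcal H}$ is constructed by repeating the argument of \cite[Proposition 7.5]{Mor}. So your sketch has to be measured against Morgan's original argument, and in outline it does follow the same route: inductive construction of the filtered and bigraded models along the canonical sequence using the mixed Hodge structures on cohomology (Theorem \ref{midimi}), an obstruction/homotopy argument for (3), and transport of the bigrading through ${\mathcal I}$ plus Proposition \ref{spmix} for (4).

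However, as a proof the proposal defers exactly the points that carry the weight. First, there is no ``filtered analogue of Theorem \ref{exunis}'' for arbitrary filtered DGAs: filtered $1$-minimal models of a general filtered DGA need not be unique up to filtered isomorphism, and homotopies compatible with the filtrations need not exist, so the uniqueness in (1) and the existence of ${\mathcal I}$ and of a $W$- and $W^{\prime}$-compatible ${\mathcal H}$ in (3) cannot be obtained by formally decorating the unfiltered obstruction theory. Morgan gets them only because $(A^{\ast},W_{\ast})$ and $(B^{\ast},W_{\ast},F^{\ast})$ form a mixed Hodge diagram: the axioms of Definition \ref{MOMHD} (isomorphism on $\,_{W}E_{1}^{\ast,\ast}$, strictness of $d_{0}$ with respect to $F$, and the weight-$q$ Hodge structures on $\,_{W}E_{1}^{p,q}$) are what make the filtered obstructions vanish, and your sketch never indicates where these hypotheses enter in part (3) -- you use them in (1)--(2) but then treat (3) as formal. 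Second, for (4) you correctly locate the heart of the matter (the congruence $\overline{({\mathcal M}^{\ast}_{\C})^{P,Q}}\equiv({\mathcal M}^{\ast}_{\C})^{Q,P}$ modulo lower total weight), but the proposal only names the ingredients ($\psi^{\prime}$, $H^{\prime}$, the integrations) without carrying out the inductive comparison that relates $\psi$ and $\psi^{\prime}$ through $\psi^{\prime\prime}$ and the $\mathbb K$-structure of ${\mathcal M}^{\ast}$; in Morgan's treatment this is a substantive argument, not bookkeeping, and it is also where one checks the descent condition required by Proposition \ref{spmix}. So the strategy is the right one, but as written parts (3) and (4) are an outline of Morgan's proof rather than a proof.
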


\begin{remark}
Morgan also show that a $1$-quasi-isomorphism $\phi$ and a  diagram $\xymatrix{
B^{\ast}&\ar[l]_{\iota}A^{\ast}_{\C}\ar[r]^{\bar\iota}&\overline{B^{\ast}}\\
&{\mathcal N}^{\ast}\ar[lu]^{\psi} \ar[u]^{\psi^{\prime\prime}}\ar[ru]_{\psi^{\prime}}
}$ are defined up to homotopy compatible with structures.
\end{remark}

\begin{remark}
An isomorphism ${\mathcal I}:({\mathcal N}^{\ast}, W_{\ast})\to ({\mathcal M}^{\ast},W_{\ast})$   is not unique.
This is very important since the $\mathbb K$-mixed Hodge structure on ${\mathcal  M}^{\ast}$ varies depending on the choice of $\mathcal I$.

\end{remark}

We will argue precise  constructions of an  isomorphism ${\mathcal I}:({\mathcal N}^{\ast}, W_{\ast})\to ({\mathcal M}^{\ast},W_{\ast})$ and a homotopy ${\mathcal H}:({\mathcal N}^{\ast},W_{\ast})\to (B^{\ast}\otimes (t,dt), W^{\prime}_{\ast})$ from $\psi$ to $\iota\circ \phi\circ \mathcal I$ in the next subsection.

\subsection{Morgan mixed Hodge structures of augmented  $\mathbb K$-mixed-Hodge diagram} \label{IHH}
Let ${\mathcal D}=\{(A^{*}, W_{*})\to^{\iota}(B^{*}, W_{*},F^{*})\}$ be an $\mathbb K$-mixed-Hodge diagram.
Additionally we assume that the restriction $d_{\vert B^{0}}:B^{0}\to B^{1}$ of the differential on $B^{0}$ is strictly compatible with the filtration $W_{\ast}$.
We fix a $1$-quasi-isomorphism $\phi:{\mathcal M}^{\ast}\to A^{\ast}$  which is compatible with the filtrations and a diagram 
$\xymatrix{
B^{\ast}&\ar[l]_{\iota}A^{\ast}_{\C}\ar[r]^{\bar\iota}&\overline{B^{\ast}}\\
&{\mathcal N}^{\ast}\ar[lu]^{\psi} \ar[u]^{\psi^{\prime\prime}}\ar[ru]_{\psi^{\prime}}
}$ 
as in Theorem \ref{Morgan}.  
It is known that there is a canonical sequence for $\psi: {\mathcal N}^{\ast}\to B^{\ast}$:
\[{\mathbb C}=\mathcal N^{\ast}_{0}\subset \mathcal N^{\ast}_{1}\subset \mathcal N^{\ast}_{2}\subset\dots,  \,\,\, \, \quad{\mathcal N}^{\ast}_{i+1}={\mathcal N}^{\ast}_{i}\otimes \bigwedge \V_{i+1}
\]
such that  the bigrading ${\mathcal N}^{\ast}=\bigoplus_{0\le P,Q}({\mathcal N}^{\ast})^{P,Q} $ can be restricted to a bigrading on $\V_{i}$ (see \cite[Section 6]{Mor}).
We also fix this sequence.

We take a subspace $C\subset B^{0}$ so that the restriction $d_{\vert C}: C\to d(B^{0})$ of the differential $d$ is bijective.
We will construct canonical ${\mathcal I}$ and ${\mathcal H}$ as in Theorem \ref{Morgan} depending on $C$.
Define $\delta_{C}:d(A^{0})\to C$ as the inverse of $d_{\vert C}: C\to d(B^{0})$.
Then  $\delta_{C}$ is strictly  compatible with the filtrations $W^{\prime}_{\ast}$ and $F^{\ast}$.
For this construction, we use the arguments  in the proof of \cite[Proposition 7.5]{Mor}.

For the fixed canonical sequence as above, 
we will construct ${\mathcal I}$ and ${\mathcal H}$ inductively.
We assume that we  have already defined ${\mathcal I}$ and ${\mathcal H}$  on $\mathcal N_{i}^{\ast}$.
Let $v\in W_{l}(\V_{i+1})$. We consider ${\mathcal I}(dv)\in W_{l}({\mathcal M}^{2})$.
Then we have $a\in W_{l}({\mathcal M}^{1})$ so that $da_{v}={\mathcal I}(dv)$.
Then, $\psi(v)-\iota\circ \phi(a_{v})+\int_{0}^{1}{\mathcal H}(dv)\in W^{\prime}_{l}(B^{1})$ is closed.
Since $\iota\circ \phi: ({\mathcal M}^{\ast},W_{\ast})\to (B^{\ast},W^{\prime}_{\ast})$ is a $1$-quasi-isomorphism,
we can take $a_{v}\in W_{l}({\mathcal M}^{1})$ so that $\psi(v)-\iota\circ \phi(a_{v})+\int_{0}^{1}{\mathcal H}(dv)\in W^{\prime}_{l}(B^{1})$ is exact.
By $H^{1}({\mathcal M}^{\ast})={\rm ker}\, d_{\vert {\mathcal M}^{1}}$, such $a_{v}$ is unique.
In fact,  if $a_{v}^{\prime}$ is  another one, we have $d(a_{v}-a_{v}^{\prime})=0$ and $[\iota\circ \phi(a_{v}-a_{v}^{\prime})]$ is trivial in  $H^{1}(B^{\ast})$ and hence by $H^{1}(B^{\ast})\cong H^{1}({\mathcal M}^{\ast})={\rm ker} d_{\vert {\mathcal M}^{1}}$, we have $a_{v}-a_{v}^{\prime}=0$.
Let 
\[b_{v}=\delta_{C}\left(\psi(v)-\iota\circ \phi(a_{v})+\int_{0}^{1}{\mathcal H}(dv)\right)\in W^{\prime}_{l}(B^{0}).
\]
Then, define ${\mathcal I}(v)=a_{v}$ and ${\mathcal H}(v)=\psi(v)+\int^{t}_{0}{\mathcal H}(dv)-db_{v}\otimes  t-b_{v}\otimes dt$.
We obtain ${\mathcal I}$ and ${\mathcal H}$ on $\mathcal N_{i+1}^{\ast}=\mathcal N_{i}^{\ast}\otimes \bigwedge \V_{i+1}$ and so on $\mathcal N^{\ast}$ by induction.

By the construction, we can say that for any $x\in {\mathcal N}^{1}$, writing ${\mathcal H}(x)=\alpha+\beta\otimes dt$ with $\alpha \in B^{1}\otimes (t) $ and $\beta\in B^{0}\otimes (t)$, we have $\beta\in C$.

Consider  $\mathbb K$ as a  $\mathbb K$-DGA so that the component of degree $0$ is  $\mathbb K$ and the differential is trivial.
 An  {\em augmentation} of a  $\mathbb K$-DGA $A^{\ast}$ is a morphism $A^{\ast}\to \mathbb K$ which induces an isomorphism 
$H^{0}(A^{\ast})\cong \mathbb K$.
\begin{definition}\label{augMHD}
An  {\em augmented  $\mathbb K$-mixed-Hodge diagram} is  $({\mathcal D},\epsilon_{A},\epsilon_{B})$ so that 
\begin{itemize}
\item $\epsilon_{A} :A^{\ast}\to  \mathbb K$ and  $\epsilon_{B} :B^{\ast}\to  \C$ are augmentations.
\item $\epsilon_{B}\circ\iota=\epsilon_{A}$.

\end{itemize}

\end{definition}

Now we can define  Morgan mixed Hodge structures of augmented  $\mathbb K$-mixed-Hodge diagrams by the following way.

\begin{definition}
For an  augmented  $\mathbb K$-mixed-Hodge diagram  $({\mathcal D},\epsilon_{A},\epsilon_{B})$,
 we take the subspace $C={\rm ker}\epsilon_{B \vert B^{0}}$.
Consider  ${\mathcal I}$ and ${\mathcal H}$ constructed as above.
We call the $\mathbb K$-mixed-Hodge structure on $\mathcal M^{\ast}$ associated   with  ${\mathcal I}$ a {\em Morgan mixed Hodge structure of an augmented  $\mathbb K$-mixed-Hodge diagram}  $({\mathcal D},\epsilon_{A},\epsilon_{B})$.
\end{definition}

\subsection{Examples of augmented mixed Hodge diagrams}\label{EXMHD}

\begin{example}\label{kah}
Let $M$ be a compact complex manifold admitting  a K\"ahler metric.
On the de Rham complex $A^{\ast}(M)$ with the bigrading $A^{n}(M)\otimes \C=\bigoplus_{p+q=n} A^{p,q}(M)$,
define the increasing filtration $W_{\ast}$ on $A^{\ast}(M)$ so that $W_{-1}(A^{\ast}(M))=0$ and $W_{0}(A^{\ast}(M))=A^{\ast}(M)$.
For the usual Hodge filtration $F^{\ast}$ on $A^{\ast}(M)\otimes \C$ i.e. $F^{r}(A^{\ast}(M)\otimes \C)=\bigoplus_{p\ge r} A^{p,q}(M)$, we can easily check that 
\[\mathcal{D}(M)=\{(A^{*}(M), W_{*})\to^{\rm id}(A^{\ast}(M)\otimes \C, W_{*},F^{*})\}\]
 is a $\mathbb R$-mixed Hodge diagram.

Let $x\in M$.
Then we have the augmentation $\epsilon_{x}:A^{\ast}(M)\to \R$ so that $\epsilon_{x}(f)=f(x)$ for $f\in A^{0}(M)$.
$(\mathcal{D}(M), \epsilon_{x},\epsilon_{x})$ is an augmented $\mathbb R$-mixed Hodge diagram.

\end{example}

\begin{example}

Let $\overline{M}$ be a compact K\"ahler manifold and $D\subset \overline{M}$ a normal crossing divisor.
Consider the complement $M=\overline{M}-D$ called a quasi-K\"ahler manifold. 
We define the real analytic logarithmic complex ${\mathcal A}^{\ast}(M ,{\rm Log}\, D)$ on $M$ introduced by Navarro Aznar \cite{NA}.
Denote by ${\mathcal A}^{\ast}(Y)$ the DGA of real analytic differential forms on a real analytic manifold $Y$.
We define the sub-DGA ${\mathcal A}^{\ast}(M ,{\rm Log}( D))\subset {\mathcal A}^{\ast}(M)$ by the following way.
For any  $x\in M$, we have a neighborhood $B$ which admits an isomorphism  $B\cong \Delta^{n}$ so that $B\cap M\cong (\Delta^{\ast})^{l}\times \Delta^{n-l}$ where $\Delta$ is the unit disc and $\Delta^{\ast}=\Delta-\{0\}$.
For such neighborhood, an element in ${\mathcal A}^{\ast}(M ,{\rm Log}( D))$ is locally written by an element in the sub-${\mathcal A}^{\ast}(B)$-algebra ${\mathcal A}^{\ast}(B,{\rm Log}( D))$ of ${\mathcal A}^{\ast}(B\cap M)$ generated by
\[{\rm Re}\left(\frac{dz_{i}}{z_{i}}\right),\,\, {\rm Im}\left(\frac{dz_{i}}{z_{i}}\right),\,\, \lambda_{i}={\rm log}z_{i}\bar{z_{i}}, \,\, 1\le i \le l.
\]
On  ${\mathcal A}^{\ast}(B,{\rm Log}( D))$, we define the multiplicative weight filtration so that sections of ${\mathcal A}^{\ast}(B)$ are of weight $0$ and 
\[{\rm Re}\left(\frac{dz_{i}}{z_{i}}\right),\,\, {\rm Im}\left(\frac{dz_{i}}{z_{i}}\right),\,\, \lambda_{i}={\rm log}z_{i}\bar{z_{i}}, \,\, 1\le i \le l.
\]
are of weight $1$.
We define the filtration $W_{\ast}$ on ${\mathcal A}^{\ast}(M ,{\rm Log}(D))$ so that 
$\alpha\in W_{k}({\mathcal A}^{\ast}(M ,{\rm Log}(D))$ if on any $B$, $\alpha$ is presented by  forms of weight $\le k$ on $B$.

By the bigrading ${\mathcal A}^{\ast}(M ,{\rm Log}\, D)_{ \C}=\bigoplus {\mathcal A}^{p,q}(M ,{\rm Log}( D))$ associated with the complex structure on $M$, we define the decreasing filtration $F^{\ast}$ on ${\mathcal A}^{\ast}(M ,{\rm Log}(D))_{\C}$ by
\[F^{r}({\mathcal A}^{\ast}(M,{\rm log}( D))\otimes \C)=\bigoplus_{p\ge r} {\mathcal A}^{p,q}(M ,{\rm Log}(D)).
\]

Navarro Aznar proves that 
\[{\mathcal D}({\mathcal A}^{\ast}(M ,{\rm Log}(D)))=\{ ({\mathcal A}^{\ast}(M ,{\rm Log}(D)), W_{\ast})\to ^{\rm id} ({\mathcal A}^{\ast}(M ,{\rm Log}(D)), F^{\ast}, W_{\ast})\}\]
 is a $\R$-mixed Hodge diagram, the inclusion ${\mathcal A}^{\ast}(M ,{\rm Log}(D))\subset A^{\ast}(U)$ is a quasi-isomorphism and the $\R$-mixed Hodge structure on $H^{k}({\mathcal A}^{\ast}(M ,{\rm Log}(D)))$ associated with  ${\mathcal D}({\mathcal A}^{\ast}(M ,{\rm Log}(D)))$ is identified with the Deligne's $\R$-mixed Hodge structure on $H^{k}(M,\R)$ for any $k$.

Let $x\in M$.
We define the augmentation $\epsilon_{x}: {\mathcal A}^{\ast}(M ,{\rm Log}(D))\to  \R$ by
$\epsilon_{x}(f)=f(x)$ for $f\in {\mathcal A}^{0}(M ,{\rm Log}(D))$.
We have the  augmented $\mathbb R$-mixed Hodge diagram $({\mathcal D}({\mathcal A}^{\ast}(M ,{\rm Log}(D))), \epsilon_{x}, \epsilon_{x})$.

By \cite{Bu}, we can obtain ${\mathcal C}^{\infty}$-version.
Define ${\mathcal E}^{\ast}(M ,{\rm Log}(D)))$ by the sub-${\mathcal C}^{\infty}(\overline{M})$-module in $A^{\ast}(M)$ generated by ${\mathcal A}^{\ast}(M ,{\rm Log}(D))$.
We should remark that this is different from the tensor product ${\mathcal C}^{\infty}(\overline{M})\otimes_{ {\mathcal A}^{0}(\overline{M})}{\mathcal A}^{\ast}(M ,{\rm Log}(D)))$.
Define the filtration $W_{\ast}$ so that each $W_{k}({\mathcal E}^{\ast}(M ,{\rm Log}(D)))$ is generated by  $W_{k}({\mathcal A}^{\ast}(M ,{\rm Log}(D)))$.
Define  $F^{\ast}$ on ${\mathcal E}^{\ast}(M ,{\rm Log}(D)))_{\C}$ by the Hodge filtration.
Then by  Burgos shows that  the inclusion ${\mathcal A}^{\ast}(M ,{\rm Log}(D))\subset {\mathcal E}^{\ast}(M ,{\rm Log}(D))$ is a quasi-isomorphism.

We have the   $\mathbb R$-mixed Hodge diagram 
\[{\mathcal D}({\mathcal E}^{\ast}(M ,{\rm Log}(D)))=\{ ({\mathcal E}^{\ast}(M ,{\rm Log}(D)), W_{\ast})\to ^{\rm id} ({\mathcal E}^{\ast}(M ,{\rm Log}(D)), F^{\ast}, W_{\ast})\}.\]
For each $x\in M$,
$({\mathcal D}({\mathcal E}^{\ast}(M ,{\rm Log}(D))), \epsilon_{x}, \epsilon_{x})$ is an augmented $\mathbb R$-mixed Hodge diagram.

\end{example}

\section{The main equivalence}
\subsection{Flat connections of DGAs}
Let $A^{\ast}$ be a $\mathbb K$-DGA.
We define the category ${\mathcal F}(A^{\ast})$ so that: 
\begin{itemize}
\item Objects are $(V,\omega)$
\begin{itemize}
\item $V$ is a finite-dimensional $\mathbb K$-vector space.
\item $\omega \in A^{1}\otimes {\rm End}(V)$ and satisfies  the Maurer-Cartan equation 
\[d\omega+\omega\wedge\omega=0.
\]
\end{itemize}
\item For $(V_{1},\omega_{1}),(V_{2},\omega_{2})\in {\rm Ob}({\mathcal F}(A^{\ast}))$, morphisms from $(V_{1},\omega_{1})$ to $(V_{2},\omega_{2})$ are $a\in A^{0}\otimes  {\rm Hom}(V_{1}, V_{2})$ satisfying 
\[da+\omega_{2}a-a\omega_{1}=0.
\]
\end{itemize}

We also  define the category ${\mathcal F}^{Wnil}(A^{\ast})$ so that: 
\begin{itemize}
\item Objects are $(V, W_{\ast},\omega)$ so that
\begin{itemize}
\item $V$ is a finite-dimensional $\mathbb K$-vector space with an increasing filtration $W_{\ast}$.
\item $\omega \in A^{1}\otimes W_{-1}({\rm End}(V))$ and satisfies the Maurer-Cartan equation 
\[d\omega+\omega\wedge\omega=0.
\]
\end{itemize}
\item For $(V_{1}, W_{\ast},\omega_{1}),(V_{2}, W_{\ast},\omega_{2})\in {\rm Ob}({\mathcal F}^{Wnil}(A^{\ast}))$, morphisms from $(V_{1}, W_{\ast},\omega_{1})$ to $(V_{2}, W_{\ast},\omega_{2})$ are $a\in A^{0}\otimes  W_{0}({\rm Hom}(V_{1}, V_{2}))$ satisfying 
\[da+\omega_{2}a-a\omega_{1}=0.
\]
\end{itemize}
We define the full subcategory $F^{nil}(A^{\ast})$ of ${\mathcal F}(A^{\ast})$ such that 
\[{\rm Ob}({\mathcal F}^{nil}(A^{\ast}))=\{(V,\omega)\in  {\rm Ob}({\mathcal F}(A^{\ast}))\vert (V, W_{\ast},\omega)\in {\rm Ob}({\mathcal F}^{Wnil}(A^{\ast}))\}.\]

\begin{example}
Let $\g$ be a $\mathbb K$-Lie algebra.
We consider the $\mathbb K$-DGA $\bigwedge \g^{\ast}$.
Then if an element in $\g^{\ast}\otimes {\rm End}(V)$ satisfies the Maurer-Cartan equation, then it is identified with a  Lie algebra representation $\g \to {\rm End}(V)$.
Thus we can identify  ${\mathcal F}(\bigwedge \g^{\ast})$ with the category ${\rm Rep}(\g)$ of finite-dimensional representations of $\g$.
We can also identify  ${\mathcal F}^{Wnil}(\bigwedge \g^{\ast})$ with the category ${\rm Rep}^{Wnil}(\g)$ of filtered nilpotent representations of $\g$ i.e. objects are representations $\g\to W_{-1}({\rm End}(V))$ for a finite-dimensional filtered $\mathbb K$-vector space $(V,W_{\ast})$ and morphisms are linear maps which are compatible with filtrations and commute with representations. 
By this, we can identify ${\mathcal F}^{nil}(\bigwedge \g^{\ast})$ with the category ${\rm Rep}^{nil}(\g)$ of nilpotent representations of $\g$.
\end{example}

Let $A^{\ast}$ and $B^{\ast}$ be $\mathbb K$-DGAs and $\phi:A^{\ast}\to B^{\ast}$ a morphism of $\mathcal K$-DGAs.
Then $\phi$ induces the functor ${\mathcal F}^{Wnil}(\phi):{\mathcal F}^{Wnil}(A^{\ast})\to {\mathcal F}^{Wnil}(B^{\ast})$
and ${\mathcal F}^{nil}(\phi):{\mathcal F}^{nil}(A^{\ast})\to {\mathcal F}^{nil}(B^{\ast})$

\begin{proposition}\label{flaeq}
If  $\phi:A^{\ast}\to B^{\ast}$ is a $1$-quasi-isomorphism, then the functor ${\mathcal F}^{Wnil}(\phi):{\mathcal F}^{Wnil}(A^{\ast})\to {\mathcal F}^{Wnil}(B^{\ast})$ (resp. ${\mathcal F}^{nil}(\phi):{\mathcal F}^{nil}(A^{\ast})\to {\mathcal F}^{nil}(B^{\ast})$)  is an equivalence.

\end{proposition}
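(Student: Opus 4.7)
The approach is an obstruction-theoretic induction driven by the filtration $W_\ast$, using the three consequences of the $1$-quasi-isomorphism hypothesis: $H^0(\phi)$ is an iso (trivially, since both DGAs are cohomologically connected), $H^1(\phi)$ is an iso, and $H^2(\phi)$ is injective. A finite-dimensional filtered $V$ produces a decreasing filtration $\mathfrak{n}_k := W_{-k}(\mathrm{End}(V))$ on the nilpotent Lie algebra $\mathfrak{n} = \mathfrak{n}_1$, satisfying $[\mathfrak{n}_k, \mathfrak{n}_\ell] \subset \mathfrak{n}_{k+\ell}$; each $\mathfrak{n}_k/\mathfrak{n}_{k+1}$ is therefore abelian, and the filtration terminates.

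For full faithfulness, fix $(V_i, W_\ast, \omega_i) \in \mathcal{F}^{Wnil}(A^\ast)$ for $i = 1, 2$, set $H := \mathrm{Hom}(V_1, V_2)$ with its induced $W$-filtration, and consider the twisted differential $d_\omega a := da + \omega_2 a - a\omega_1$ on $A^\ast \otimes H$. Since $\omega_i \in W_{-1}$, $d_\omega$ preserves the subcomplex filtration $A^\ast \otimes W_k H$, and on each $A^\ast \otimes \mathrm{Gr}^W_k H$ reduces to the untwisted $d$. The hom-set in $\mathcal{F}^{Wnil}(A^\ast)$ equals $H^0(A^\ast \otimes W_0 H, d_\omega)$. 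The short exact sequences $0 \to W_{k-1} H \to W_k H \to \mathrm{Gr}^W_k H \to 0$ yield long exact sequences on both sides; on graded pieces the map is $H^i(A^\ast) \otimes \mathrm{Gr}^W_k H \to H^i(B^\ast) \otimes \mathrm{Gr}^W_k H$, iso for $i = 0, 1$ and injective for $i = 2$. An iterated Five-Lemma, inducting on $k$ with the joint claim that $\phi$ induces an iso on $H^0$ and an injection on $H^1$ of the twisted complex at level $W_k H$, gives the iso on $H^0(W_0 H)$ and hence fully faithfulness.

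For essential surjectivity, given $(V, W_\ast, \omega') \in \mathcal{F}^{Wnil}(B^\ast)$, construct inductively pairs $(\omega_{(k)}, a_{(k)})$ with $\omega_{(k)} \in A^1 \otimes (\mathfrak{n}/\mathfrak{n}_{k+1})$ satisfying Maurer-Cartan mod $\mathfrak{n}_{k+1}$, and $a_{(k)} \in \mathrm{Id}_V + B^0 \otimes (\mathfrak{n}/\mathfrak{n}_{k+1})$ with $da_{(k)} + \omega' a_{(k)} - a_{(k)} \phi(\omega_{(k)}) \equiv 0 \pmod{\mathfrak{n}_{k+1}}$. At the inductive step, pick arbitrary lifts $\tilde\omega, \tilde a$ into $\mathfrak{n}/\mathfrak{n}_{k+2}$; the failure of $\tilde\omega$ to be MC defines a class $[\alpha] \in H^2(A^\ast) \otimes (\mathfrak{n}_{k+1}/\mathfrak{n}_{k+2})$. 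A Bianchi-type calculation on the $B$-side—using that $\omega'$ is genuinely MC and that the cross-terms involving the gauge $\tilde a$ vanish in the abelian quotient—shows that $H^2(\phi)([\alpha]) = 0$ is exact; injectivity of $H^2(\phi)$ then forces $[\alpha] = 0$, and subtracting a primitive $\gamma \in A^1 \otimes (\mathfrak{n}_{k+1}/\mathfrak{n}_{k+2})$ from $\tilde\omega$ produces $\omega_{(k+1)}$. The residual discrepancy of the gauge equation becomes a closed element of $B^1 \otimes (\mathfrak{n}_{k+1}/\mathfrak{n}_{k+2})$, whose cohomology class is killed by further adjusting $\omega_{(k+1)}$ by a closed element of $A^1$ (possible by surjectivity of $H^1(\phi)$), after which a primitive $\epsilon \in B^0 \otimes (\mathfrak{n}_{k+1}/\mathfrak{n}_{k+2})$ modifies $\tilde a$ to $a_{(k+1)}$. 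Nilpotency of $\mathfrak{n}$ terminates the induction.

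The main obstacle I foresee is the coherent bookkeeping in the third paragraph: verifying the Bianchi identity in the abelian quotient and, crucially, ensuring that the residual gauge discrepancy is a closed cocycle so that the $H^1$-level correction applies—this requires simultaneously tracking three independent adjustments (a primitive of the curvature, a closed $1$-cocycle on the $A$-side, and a gauge transformation on the $B$-side). The $\mathcal{F}^{nil}$ variant follows: every $(V, \omega) \in \mathcal{F}^{nil}(A^\ast)$ by definition admits some $W_\ast$ placing it in $\mathcal{F}^{Wnil}(A^\ast)$, so essential surjectivity descends; full faithfulness reduces to the Wnil case by equipping $V_1, V_2$ with compatible filtrations shifted so that any given morphism lies in $W_0\,\mathrm{Hom}(V_1, V_2)$, which is possible because finite shifts of $W_\ast$ do not disturb the condition $\omega_i \in W_{-1}(\mathrm{End}(V_i))$.
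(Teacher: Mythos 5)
Your proposal is correct and takes essentially the same route as the paper, which proves fully-faithfulness by the same filtered homological-algebra argument (deferring to \cite[Proposition 10.1.1]{Kasuya}) and essential surjectivity via Lemma \ref{eqmilll} (\cite[Lemma 10.1.2]{Kasuya}), i.e.\ the simultaneous inductive construction of the Maurer--Cartan element $\Omega$ on the $A$-side and the gauge $a$ on the $B$-side, organized by a grading of ${\rm End}(V)$ compatible with $W_{\ast}$ rather than by your quotients $\mathfrak{n}_{k}/\mathfrak{n}_{k+1}$. Your obstruction/Bianchi bookkeeping (killing the curvature class by injectivity of $H^{2}(\phi)$ and the gauge discrepancy by surjectivity of $H^{1}(\phi)$) does close up, so the difference from the paper is only presentational.
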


\begin{proof}
This proposition is given by the same arguments in \cite[Subection 10.1]{Kasuya}.
The  fully-faithfulness of ${\mathcal F}^{Wnil}(\phi)$ is proved by standard  arguments of homology algebra as in the proof of \cite[Proposition 10.1.1]{Kasuya}.
We can prove that ${\mathcal F}^{Wnil}(\phi)$ is   essentially-surjective by \cite[Lemma 10.1.2]{Kasuya}.
For our convenience, we refer the statement.

 We take a grading $V=\bigoplus V_{i}$ of vector space which is compatible with the filtration $W_{\ast}$.
 Corresponding to this grading, we have the grading ${\rm End}(V)=\bigoplus U_{i}$ and $U_{i}U_{j}\subset U_{i+j}$.
 For $\omega \in {\rm Ob}({\mathcal F}^{Wnil}(B^{\ast}))$, we write $\omega=\sum_{i\le -1} \omega _{i}$ with $\omega_{i}\in B^{1}\otimes U_{i}$. 
 By the  Maurer-Cartan equation, for each $k$, we have
 \[d\omega_{k}=-\sum_{i+j=k}\omega_{i}\wedge \omega_{j}.
 \] 
 We denote $a_{0}={\rm Id}\in {\rm End}(V)$.
\begin{lemma}[{\rm \cite[Lemma 10.1.2]{Kasuya}}]\label{eqmilll}

For all positive integers $k$,  there exist $\Omega_{k}\in A^{1}\otimes U_{k}$ and $a_{k} \in B^{0}\otimes  U_{k}$
such that 
\[d\Omega_{k}=-\sum _{i+j=k}\Omega_{i}\wedge \Omega_{j}
\qquad
{\rm and} \qquad
da_{k}=\sum_{i+j=k}(-\omega_{i}a_{j}+ a_{i}\phi(\Omega_{j})).
\]
\end{lemma} 
\begin{remark}\label{LEmr}
This is proved by the induction and the inductive step is direct consequence of a $1$-quasi-isomorphism $\phi:A^{\ast}\to B^{\ast}$.
If $A^{\ast}$ and $B^{\ast}$ are filtered and $d:A^{1}\to A^{2}$ and $d:B^{0}\to B^{1}$ are strictly compatible, then we can take all $\Omega_{k}$ and $a_{k}$ in the same filtration degree as  $\omega$.
For a   subspace $C\subset B^{0}$ so that the restriction $d_{\vert C}: C\to d(B^{0})$ of the differential $d$ is bijective,
we can take   $a_{k}\in C\otimes  U_{k}$.
\end{remark}
Let $\Omega=\sum\Omega_{i}\in A^{1} \otimes W_{-1}({\rm End}(V))$ and  $a=\sum a_{i}\in {\rm Id}+B^{0}\otimes W_{-1}({\rm End}(V))$.
We obtain the equations
\[d\Omega+\Omega\wedge \Omega=0
\qquad
{\rm and}
\qquad da+\omega a-a\phi(\Omega)=0.
\]
Then $(V, W_{\ast},\phi(\Omega))$ is isomorphic to the given $(V, W_{\ast},\omega)$ via $a$.

\end{proof}

 Let ${\mathcal M}^{\ast}$ be the $1$-minimal model of a $\mathbb K$-DGA $A^{\ast}$ with a $1$-quasi-isomorphism $\phi:{\mathcal M}^{\ast}\to  A^{\ast}$.
By this proposition, we have the equivalence ${\mathcal F}^{Wnil}(\phi):{\mathcal F}^{Wnil}({\mathcal M}^{\ast})\to {\mathcal F}^{Wnil}(A^{\ast})$ of categories.
Consider the dual Lie algebra $\frak n$ of the minimal DGA ${\mathcal M}^{\ast}$.
We can say that ${\mathcal F}^{Wnil}(A^{\ast})$  is equivalent to ${\rm Rep}^{W}(\frak n)$.

Let  $\epsilon_{A}:A^{\ast}\to \mathbb K$ be  a augmentation of a $\mathbb K$-DGA $A^{\ast}$.
For any $(V, W_{\ast},\omega)\in {\rm Obj}({\mathcal F}^{Wnil}(A^{\ast}))$, by iterating the use  of the cohomology long exact sequence and the five-Lemma, we can easily say that the map $\epsilon :H^{0}_{\omega}(A^{\ast}\otimes V)= {\rm ker } (d+\omega)_{\vert  A^{0}\otimes V}\to V$ is injective.
This implies an important consequence.
We define the functor $\varepsilon={\mathcal F}^{Wnil}(\epsilon):  {\mathcal F}^{Wnil}(A^{\ast})\to  {\mathcal F}^{Wnil}(\mathbb K)$.
Then this is a faithful functor.

\begin{proposition}[{\rm cf.\cite{Man}, \cite[Proposition 7.2.3]{Kasuya}}]\label{gauge}
Let $A^{\ast}$ be a $\mathbb K$-DGA and $\epsilon_{A}:A^{\ast}\to \mathbb K$ an augmentation.
For  $(V, W_{\ast},\omega_{0}), (V, W_{\ast},\omega_{1})\in  Ob({\mathcal F}^{Wnil}(A^{\ast}))$,
we assume that we have $(\tilde\omega, V, W_{\ast})\in  Ob({\mathcal F}^{Wnil}(A^{\ast}\otimes (t,dt)))$ such that:
\begin{itemize}
\item $\tilde\omega_{\vert t=0}=\omega_{0}$ and $\tilde\omega_{\vert t=1}=\omega_{1}$;
\item   $\tilde\omega=\alpha+\beta dt$ such that $\alpha\in A^{1}\otimes (t)\otimes {\rm End} (V)$ and $\beta\in C\otimes (t)\otimes {\rm End} (V)$ where $C={\rm ker}\epsilon_{A \vert A^{0}}$.
\end{itemize}
Then, there exists a unique  $a={\rm Id}_{V} + A$ such that $A\in C\otimes W_{-1}({\rm End}(V))$  and 
$\omega_{1}=a\omega_{0}a^{-1}-daa^{-1} $ (i.e. $(V, W_{\ast},\omega_{0})$ is isomorphic to $(V, W_{\ast},\omega_{1})$ via $a$).

\end{proposition}
\begin{proof}

We use the techniques in  \cite[Lemma 5.6, Proposition 5.7 and the proof of Theorem 5.5]{Man}.
We will find $A(t)\in C\otimes (t)\otimes  W_{-1}({\rm End}(V))$
such that $\alpha(t)=\tilde{a}\omega_{0}\tilde{a}^{-1}-d_{A^{\ast}}\tilde{a}\tilde{a}^{-1}$ for $\tilde{a}=\exp(A(t))$.
As  explained around  \cite[Proposition 5.7]{Man}, it is sufficient to solve the differential equation
\[\frac{d}{dt}A(t)+\gamma^{A}(t)=\beta(t)
\]
with $A(0)=0$ for certain $\gamma^{A}(t) \in A^{0}\otimes (t)\otimes W_{-1}({\rm End}(V))$.
By using  the Baker-Campbell-Hausdorff formula $\exp (X)\exp(Y)=\exp(X+Y+\frac{1}{2}[X,Y]+\frac{1}{12}([X,[X,Y]]+[Y,[Y,X]])\dots)$ and the Taylor expansion $A(t+h)=A(t)+\frac{d}{dt}A(t)h+\dots$, we have
\[\exp\left(A(t+h)\right)\exp\left(-A(t)\right)=\exp\left((A^{\prime}(t)+\gamma^{A}(t))h+\delta(t,h)h^{2}\right).
\]

We take a decomposition $V=\bigoplus_{i} V_{i}$  so that  $\bigoplus_{i\le k}V_{i}=W_{k}((V))$.
This decomposition induces the decomposition ${\rm End}(V)=\bigoplus {\rm End}_{i}(V)$ such that $\bigoplus_{i\le k}\bigoplus {\rm End}_{i}(V)=W_{k}({\rm End}(V))$ and $[ {\rm End}_{i}(V), {\rm End}_{j}(V)]\subset  {\rm End}_{i+j}(V)$.
We write $A(t)=\sum_{i}A_{i}(t)$, $\gamma^{A}(t) =\sum_{i}\gamma^{A}_{i}(t)$ and $\beta(t)=\sum_{i}\beta_{i}(t)$ associated with this decomposition.
Then each $\gamma^{A}_{i}(t)$ is a linear combination of iterated products of $A_{j}(t)$ and $A^{\prime}_{j}(t)$ with $j>i$.
Thus, inductively, we can solve $A(t)=\sum_{i}A_{i}(t)$ as
\[A_{i}(t)=\int_{0}^{t}\left(\beta_{i}(t)-\gamma^{A}_{i}(t)\right)
\]
and we can say $A_{i}(t)\in C\otimes (t)\otimes  {\rm End}_{i}(V)$ by $\beta\in C\otimes (t)\otimes {\rm End} (V)$.

By the faithfulness of the functor $\varepsilon$, $a\in {\rm Hom}((V,W_{\ast}, \omega_{0}), (V,W_{\ast},\omega_{1}))$ with $\varepsilon (a)= {\rm Id}_{V}$  is unique.
Thus the uniqueness  follows.

\end{proof}

\subsection{The category $VMHS^{u}({\mathcal D})$}\label{VMH}
Let ${\mathcal D}=\{(A^{*}, W_{*})\to^{\iota}(B^{*}, W_{*},F^{*})\}$ be an $\mathbb K$-mixed-Hodge diagram.
We define the category $VMHS^{u}({\mathcal D})$ such that:
\begin{itemize}
\item Objects are $(V, W_{\ast}, F^{\ast}, \omega,\omega^{\prime}, a)$ so that:
\begin{itemize}
\item $(W_{\ast}, F^{\ast})$ is a $\mathbb K$-mixed Hodge structure on a finite-dimensional $\mathbb K$-vector space $V$
\item $\omega\in {\rm Ob}({\mathcal F}^{Wnil}(A^{\ast}))$.
\item  $\omega^{\prime}\in {\rm Ob}({\mathcal F}^{Wnil}(B^{\ast}))$ satisfying \[\omega^{\prime} \in F^{0}\left(B^{\ast}\otimes W_{-1}({\rm End}(V_{\C}))\right).\]
\item $a\in {\rm Hom}((V_{\C}, W_{\ast}, \omega^{\prime}), (V_{\C}, W_{\ast},\iota(\omega)))$ so that $a\in {\rm Id}_{V}+B^{0}\otimes W_{-1}({\rm End}(V_{\C})$.

\end{itemize}
\item   Morphisms from $(V_{1}, W_{\ast} ,F^{\ast}, \omega_{1},\omega_{1}^{\prime}, a_{1})$ to  $(V_{2}, W_{\ast} ,F^{\ast}, \omega_{2},\omega_{2}^{\prime}, a_{2})$ are $(b,b^{\prime})$ 
so that
\begin{itemize}
\item $b\in {\rm Hom}((V_{1},W_{\ast}, \omega_{1}), (V_{2},W_{\ast},\omega_{2}))$ in ${\mathcal F}^{Wnil}(A^{\ast})$.
\item $b^{\prime}\in {\rm Hom}((V_{1\C}, W_{\ast}, \omega^{\prime}_{1}),(V_{2\C}, W_{\ast},\omega^{\prime}_{2}))$ satisfying 
\[
b^{\prime}\in F^{0}\left(B^{0} \otimes  W_{0}({\rm Hom}(V_{1\C}, V_{2\C}))\right) .\]
\item The diagram 
 \[\xymatrix{
(V_{1\C},W_{\ast},\iota( \omega_{1}))\ar[r]^{\iota(b)}&(V_{2\C},W_{\ast}, \iota(\omega_{2}))\\
(V_{1\C},W_{\ast}, \omega_{1}^{\prime})\ar[r]^{b^{\prime}}\ar[u]^{a_{1}}&(V_{2\C},W_{\ast}, \omega_{2}^{\prime})\ar[u]^{a_{2}}
}
\]
commutes.
\end{itemize}
\end{itemize}

In case ${\mathcal D}$ is the $\R$-mixed Hodge diagram on a compact K\"ahler manifold $M$ as in Example \ref{kah},
this category is identified with the category of real variations of mixed Hodge structures over $M$ (see \cite{Kasuya}). 

\subsection{The main equivalence}\label{eqeq2}
Let $({\mathcal D},\epsilon_{A},\epsilon_{B})$ be an augmented  $\mathbb K$-mixed-Hodge diagram.
We take the subspace $C={\rm ker}\epsilon_{B \vert B^{0}}$ as the above condition.
We fix a $1$-quasi-isomorphism $\phi:{\mathcal M}^{\ast}\to A^{\ast}$  which is compatible with the filtrations and a diagram 
$\xymatrix{
B^{\ast}&\ar[l]_{\iota}A^{\ast}_{\C}\ar[r]^{\bar\iota}&\overline{B^{\ast}}\\
&{\mathcal N}^{\ast}\ar[lu]^{\psi} \ar[u]^{\psi^{\prime\prime}}\ar[ru]_{\psi^{\prime}}
}$ 
as in Theorem \ref{Morgan}.  

We consider the  canonical ${\mathcal I}$ and ${\mathcal H}$ as in Subsection \ref{IHH}.
By theorem \ref{Morgan}, we have the $\mathbb K$-mixed Hodge structure on ${\mathcal M}^{\ast}$.
Consider the dual Lie algebra $\frak n$ of the minimal DGA ${\mathcal M}^{\ast}$.
Then we have the dual $\mathbb K$-mixed Hodge structure $(W_{\ast}, F^{\ast})$ on $\frak n$.

We consider the category ${\rm Rep}({\frak n}, W_{\ast},F^{\ast})$ such that:
\begin{itemize}
\item Objects are $(\Omega, V, W_{\ast},F^{\ast})$
\begin{itemize}
\item $(W_{\ast},F^{\ast})$ is a $\mathbb K$-mixed Hodge structure on a finite-dimensional $\mathbb K$-vector space $V$.
\item $\Omega: {\frak n}\to  {\rm End}(V)$ is a Lie algebra representation which is a morphism of $\mathbb K$-mixed Hodge structures.
\end{itemize} 
\item Morphisms are morphisms of $\frak n$-modules which are also morphisms of $\mathbb K$-mixed Hodge structures.

\end{itemize}
By $\frak n=W_{-1}(\frak n)$, we can say that  ${\rm Rep}({\frak n}, W_{\ast},F^{\ast})$ can be seen as a subcategory of ${\rm Rep}^{Wnil}(\frak n)$.

We construct objects in $VMHS^{u}({\mathcal D})$ from objects in ${\rm Rep}({\frak n}, W_{\ast},F^{\ast})$.
\begin{construction}\label{acons}
Let \[(\Omega, V, W_{\ast},F^{\ast})\in {\rm  Ob} ({\rm Rep}({\frak n}, W_{\ast},F^{\ast})).\]
Considering $\Omega$ as an element in  ${\mathcal M}^{1}\otimes {\rm End}(V)$, we have \[(\Omega, V, W_{\ast})\in {\rm Ob}({\mathcal F}^{Wnil}({\mathcal M}^{\ast})).\]
Thus we have $(\phi(\Omega), V, W_{\ast})\in {\rm Ob}({\mathcal F}^{Wnil}(A^{\ast}))$ and $(\psi\circ {\mathcal I}^{-1}(\Omega), V_{\C}, W_{\ast})\in {\rm Ob}({\mathcal F}^{Wnil}(B^{\ast}))$.
Since $\Omega$ is a morphism of $\mathbb K$-mixed Hodge structure, we have $\Omega \in F^{0}({\mathcal M}^{1}_{\C}\otimes {\rm End}(V_{\C}))$ which says $\Omega\in \sum_{P+r\ge 0} {\mathcal I}((N^{1})^{P,Q}\otimes F^{r}(V_{\C})$.
By the property of $\psi : {\mathcal N}^{\ast}\to  B^{\ast}$, we have $\psi\circ {\mathcal I}^{-1}(\Omega)\in F^{0}\left(B^{\ast}\otimes W_{-1}({\rm End}(V_{\C}))\right)$.
We consider ${\mathcal H}\circ {\mathcal I}^{-1}(\Omega)\in B^{\ast}\otimes (t ,dt)$. 
Then we have ${\mathcal H}\circ {\mathcal I}^{-1}(\Omega)_{\vert t=0}=\psi \circ {\mathcal I}^{-1}(\Omega)$ and ${\mathcal  H}\circ {\mathcal I}^{-1}(\Omega)_{\vert t=1}=\iota\circ \phi (\Omega)$.
By Proposition \ref{gauge}, we have $A\in C\otimes W_{-1}({\rm End}(V))$ such that denoting $a={\rm Id}_{V} + A$ we have
$\iota\circ \phi (\Omega)=a\psi \circ {\mathcal I}^{-1}(\Omega)a^{-1}-da a^{-1} $.
Thus, we obtain $(V, W_{\ast}, F^{\ast}, \phi (\Omega), \psi \circ {\mathcal I}^{-1}(\Omega), a)\in {\rm Ob}(VMHS^{u}({\mathcal D}) )$.
\end{construction}

\begin{definition}
We define the functor $\Phi_{C}:{\rm Rep}({\frak n}, W_{\ast},F^{\ast})  \to  VMHS^{u}({\mathcal D}) $ such that 
for $(\Omega, V, W_{\ast},F^{\ast})\in {\rm  Ob} ({\rm Rep}({\frak n}, W_{\ast},F^{\ast}))$,
\[\Phi_{C}((\Omega, V, W_{\ast},F^{\ast}))=(V, W_{\ast}, F^{\ast}, \phi (\Omega), \psi \circ {\mathcal I}^{-1}(\Omega), a)\in {\rm Ob}(VMHS^{u})
\]
and  for  $f\in {\rm Hom}\left((\Omega_{1}, V_{1}, W_{\ast},F^{\ast}), (\Omega_{2}, V_{2}, W_{\ast},F^{\ast})\right)$, 
$\Phi_{C}(f)=(f, f)$
where    $a\in {\rm Hom}((V_{\C}, W_{\ast}, \psi \circ {\mathcal I}^{-1}(\Omega)), (V_{\C}, W_{\ast},\iota\circ \phi (\Omega)))$  is constructed as in Construction \ref{acons}.

\end{definition}

We check that $\Phi_{C}(f)=(f, f)$ is actually a morphism in $VMHS^{u}({\mathcal D}) $.
Consider the isomorphisms $a(1): (V_{1\C},W_{\ast},\psi \circ {\mathcal I}^{-1}(\Omega_{1}))\cong (V_{1\C},W_{\ast},\iota\circ \phi( \Omega_{1}))$ and  $a(2): (V_{2\C},W_{\ast},\psi \circ {\mathcal I}^{-1}(\Omega_{2}))\cong (V_{1\C},W_{\ast},\iota\circ \phi( \Omega_{2}))$  as in  Proposition  \ref{gauge},  the diagram 
 \[\xymatrix{
(V_{1\C},W_{\ast},\iota\circ \phi( \Omega_{1}))\ar[r]^{f}&(V_{2\C},W_{\ast}, \iota\circ \phi( \Omega_{2}))\\
(V_{1\C},W_{\ast},\psi \circ {\mathcal I}^{-1}(\Omega_{1}))\ar[r]^{f}\ar[u]^{a_{1}}&(V_{2\C},W_{\ast}, \psi \circ {\mathcal I}^{-1}(\Omega_{2}))\ar[u]^{a_{2}}
}
\]
commutes by the following reason.
By $fa_{1}, a_{2}f\in {\rm Hom}((V_{1\C},W_{\ast},\psi \circ {\mathcal I}^{-1}(\Omega_{1}),(V_{2\C},W_{\ast}, \iota\circ \phi( \Omega_{2}))) $ with $\varepsilon(fa_{1})=f=\varepsilon(a_{2}f)$, we have $fa_{1}=a_{2}f$.
Hence, $\Phi_{C}:{\rm Rep}({\frak n}, W_{\ast},F^{\ast})  \to  VMHS^{u}({\mathcal D}) $ is indeed  a functor.

\begin{theorem}\label{EQVMH}
The functor $\Phi_{C}:{\rm Rep}({\frak n}, W_{\ast},F^{\ast})  \to  VMHS^{u}({\mathcal D}) $ is an equivalence.
We can take a quasi-inverse $\Phi_{C}^{\prime}:VMHS^{u}({\mathcal D})\to {\rm Rep}({\frak n}, W_{\ast},F^{\ast})   $ of $\Phi_{C}:{\rm Rep}({\frak n}, W_{\ast},F^{\ast})  \to  VMHS^{u}({\mathcal D}) $ so that
for any $(V, W_{\ast}, F^{\ast}, \omega,\omega^{\prime}, a)\in {\rm Ob}(VMHS^{u}({\mathcal D}))$, we can write
\[\Phi_{C}^{\prime}(V, W_{\ast}, F^{\ast}, \omega,\omega^{\prime}, a)=( \Omega, V, W_{\ast}, \epsilon_{B}(a)F^{\ast}).
\]

\end{theorem}
\begin{proof}
For $(\Omega_{1}, V_{1}, W_{\ast},F^{\ast}), (\Omega_{2}, V_{2}, W_{\ast},F^{\ast})\in {\rm Ob}({\rm Rep}({\frak n}, W_{\ast},F^{\ast}))$, by Proposition \ref{flaeq},  we obtain the  identifications
\[{\rm Hom}((\Omega_{1}, V_{1}, W_{\ast}),  (\Omega_{2}, V_{1}, W_{\ast}))= {\rm Hom}((\phi(\Omega_{1}), V_{1}, W_{\ast}),  (\phi(\Omega_{2}), V_{1}, W_{\ast}))\]
 and 
 \[{\rm Hom}((\Omega_{1}, V_{1\C}, W_{\ast}),  (\Omega_{2}, V_{1\C}, W_{\ast}))= {\rm Hom}((\psi\circ {\mathcal I}^{-1}(\Omega_{1}), V_{1\C}, W_{\ast}),  (\psi\circ {\mathcal I}^{-1}(\Omega_{2}), V_{1\C}, W_{\ast})).\]
By these identifications, we can easily show the fully-faithfulness of $\Phi_{C}$.

We prove that the functor is essentially-surjective.
Let $(V, W_{\ast}, F^{\ast}, \omega,\omega^{\prime}, a)\in {\rm Ob}(VMHS^{u}({\mathcal D}))$.
Then, by Proposition \ref{flaeq}, we can take
 \[(\Omega, V, W_{\ast})\in {\rm Ob}({\mathcal F}^{Wnil}({\mathcal M}^{\ast}))\qquad {\rm and}\qquad (\Omega^{\prime}, V, W_{\ast})\in {\rm Ob}({\mathcal F}^{Wnil}({\mathcal N}^{\ast}))\]
  and isomorphisms 
\[b:(\phi(\Omega), V, W_{\ast})\cong (\omega, V, W_{\ast})
\qquad {\rm and}\qquad b^{\prime}: (\psi(\Omega^{\prime}), V, W_{\ast})\cong (\omega^{\prime}, V, W_{\ast}).
\]
We notice that we can take $\Omega^{\prime}\in F^{0}({\mathcal N}^{1}\otimes W_{-1}({\rm End}(V)))$ and $b^{\prime}\in F^{0}\left(B^{0} \otimes  W_{0}({\rm Hom}(V_{1\C}, V_{2\C}))\right)$.
Indeed since the differentials $d$ on ${\mathcal N}^{\ast}$ and $d$ on $B^{0}$  are strictly compatible with the filtration $F^{\ast}$,  on  constructions as in  Lemma \ref{eqmilll}, we can take  each term in  $F^{0}$.
By Proposition \ref{gauge}, we have an isomorphism $\hat{a}: (V_{\C},W_{\ast}, \iota\circ \phi(\Omega))\cong (V_{\C},W_{\ast}, \psi\circ {\mathcal I}^{-1}(\Omega))$.

Now we have the diagram 
 \[\xymatrix{
(V_{\C},W_{\ast},\iota( \omega))&\ar[l]^{\iota(b)}(V_{\C},W_{\ast},\iota\circ \phi(\Omega))\\
& (V_{\C},W_{\ast}, \psi\circ {\mathcal I}^{-1}(\Omega)) \ar[u]^{\hat{a}}\\
(V_{\C},W_{\ast}, \omega^{\prime})\ar[uu]^{a}&\ar[l]^{b^{\prime}}(V_{\C},W_{\ast}, \psi( \Omega^{\prime}))\\
}
\]
Thus we have the isomorphism $c:(V_{\C},W_{\ast}, \psi\circ {\mathcal I}^{-1}(\Omega))\cong  (V_{\C},W_{\ast}, \psi( \Omega^{\prime}))$ associated with this diagram.
By Proposition \ref{flaeq} we can regard $c$ as an isomorphism $c: (V_{\C},W_{\ast}, {\mathcal I}^{-1}(\Omega))\cong (V_{\C},W_{\ast},  \Omega^{\prime})$.
Finally, $(V, W_{\ast}, cF^{\ast}, \Omega) \in {\rm Ob}( {\rm Rep}({\frak n}, W_{\ast},F^{\ast}) )$ and 
\[\Phi_{C}((\Omega, V, W_{\ast}, cF^{\ast}))=(V, W_{\ast}, cF^{\ast},  \phi(\Omega), \psi\circ {\mathcal I}^{-1}(\Omega), \hat{a})\]
is isomorphic to $(V, W_{\ast}, F^{\ast}, \omega,\omega^{\prime}, a)\in {\rm Ob}(VMHS^{u}({\mathcal D}))$.

We prove the second assertion.
 for an  augmented  $\mathbb K$-mixed-Hodge diagram  $({\mathcal D},\epsilon_{A},\epsilon_{B})$, in the diagram as above, we can take $b$ and $b^{\prime}$ such that  $\epsilon_{A}(b)=Id_{V}$ and $\epsilon_{B}(b^{\prime})=Id_{V_{\C}}$ as Remark \ref{LEmr}.
By Proposition \ref{gauge}, $\hat{a}\in Id_{V_{\C}}+ C\otimes W_{-1}({\rm End}(V_{\C}))$ and so  $\epsilon_{B}(\hat{a})=Id_{V_{\C}}$.
Hence we can have $c=\epsilon_{B}(a)$ and taking the quasi-inverse $\Phi_{C}^{\prime}:VMHS^{u}({\mathcal D})\to {\rm Rep}({\frak n}, W_{\ast},F^{\ast})   $ of $\Phi_{C}:{\rm Rep}({\frak n}, W_{\ast},F^{\ast})  \to  VMHS^{u}({\mathcal D}) $ as  $\Phi_{C}^{\prime}(V, W_{\ast}, F^{\ast}, \omega,\omega^{\prime}, a)=(\Omega, V, W_{\ast}, cF^{\ast})$, the second assertion follows.

\end{proof}

\section{Non-abelian mixed Hodge structures and the main statement}\label{NMHsect}
\subsection{Tannakian categories}
A category $\mathcal C$ with a functor $\otimes: {\mathcal C}\times {\mathcal C}\to {\mathcal C}$  is a {\em (additive)  ${\mathbb K}$-tensor category} if:
\begin{itemize}
\item  $\mathcal C$ is an additive $\C$-linear category.
\item $\otimes : {\mathcal C}\times {\mathcal C}\to {\mathcal C}$ is a bi-linear functor which satisfies the associativity and commutativity. (see \cite{DM})
\item There exists an identity object $({\bf 1}, u)$ that is ${\bf 1}\in {\rm Ob}({\mathcal C})$ with an isomorphism $u:{\bf 1}\to {\bf 1}\otimes {\bf 1}$ satisfying the functor ${\mathcal C}\ni V\mapsto {\bf 1}\otimes V\in {\mathcal C}$ is an equivalence of categories.

\end{itemize}
A $\mathbb K$-tensor category $\mathcal C$ is {\em rigid}  if all objects admit duals.

For two $\mathbb K$-tensor categories   $({\mathcal C}_{1},\otimes_{1})$ and $({\mathcal C}_{2},\otimes_{2})$,
a {\em tensor functor} is a functor $F:{\mathcal C}_{1}\to {\mathcal C}_{2}$ with a functorial isomorphism $c_{U,V}:F(U)\otimes F(V)\to F(U\otimes V)$ so that $(F,c)$ is compatible with the associativities and commutativities of $({\mathcal C}_{1},\otimes_{1})$ and $({\mathcal C}_{2},\otimes_{2})$ (see \cite{DM}) and 
for an identity object  $({\bf 1}, u)$ of $({\mathcal C}_{1},\otimes_{1})$ $(F({\bf 1}), F(u))$ is an identity object of $({\mathcal C}_{2},\otimes_{2})$.

The category ${\rm Vect}_{\C}$ of $\C$-vector space with the usual tensor product $\otimes$ is a tensor category.
For a tensor category $({\mathcal C},\otimes)$, an exact faithful tensor functor ${\mathcal C}\to {\rm Vect}_{\C}$ is called a {\em fiber functor} for ${\mathcal C}$.
A {\em neutral $\mathbb K$-Tannakian category}  is an abelian rigid   $\mathbb K$-tensor category $\mathcal C$ with a fiber functor $\omega: {\mathcal C}\to {\rm Vect}_{\mathbb K}$  such that   $\mathbb K= {\rm End}(\bf 1)$.

\begin{theorem}[\cite{DM}]
Every neutral  $\mathbb K$-Tannakian category $\mathcal C$ is equivalent to the category ${\rm Rep}(G)$ of finite-dimensional representations of an pro-algebraic group $G$ over $\mathbb K$.
More precisely,  this correspondence $\mathcal C\mapsto G$ is  a contravariant  functor $\Pi$ from the category of neutral  Tannakian categories to the category of pro-algebraic groups over $\mathbb K$
where morphisms of neutral  Tannakian categories are exact faithful tensor functors commuting with fiber functors.
\end{theorem}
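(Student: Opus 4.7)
The plan is to recover $G$ as the pro-algebraic group of tensor automorphisms of the fiber functor $\omega$. For each commutative $\mathbb K$-algebra $R$, I would define
\[G(R) = {\rm Aut}^{\otimes}(\omega \otimes R),\]
the group of natural transformations $\lambda : \omega(-)\otimes R \to \omega(-)\otimes R$ that are isomorphisms and compatible with the tensor structure, meaning $\lambda_{V\otimes W} = \lambda_V \otimes \lambda_W$ and $\lambda_{\bf 1} = {\rm id}$. The equivalence $F : \mathcal C \to {\rm Rep}(G)$ then sends $V$ to $\omega(V)$ with the tautological action $g\cdot v = g_V(v)$, and a morphism $f : U \to V$ to $\omega(f)$, which is automatically $G$-equivariant by naturality.

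First I would establish representability. For each finite family $V_1,\dots,V_n \in {\rm Ob}(\mathcal C)$ the tensor-compatibility relations cut out a closed subscheme of $\prod_i {\rm GL}(\omega(V_i))$, and $G$ is the inverse limit over all such families, yielding an affine group scheme over $\mathbb K$; rigidity of $\mathcal C$ ensures closure under inversion. Equivalently, the coordinate Hopf algebra of $G$ is the coend
\[{\mathcal O}(G) = \int^{V \in \mathcal C} \omega(V)^{\vee} \otimes \omega(V),\]
whose Hopf structure is induced from the tensor structure of $\mathcal C$ via $\omega$. Full faithfulness of $F$ reduces, by rigidity and the replacement ${\rm Hom}_{\mathcal C}(U,V) = {\rm Hom}_{\mathcal C}({\bf 1}, U^{\vee}\otimes V)$, to the assertion that the subspace of $\omega(W)$ fixed by $G$ coincides with $\omega({\rm Hom}_{\mathcal C}({\bf 1}, W))$, which is immediate from the definition of $G$ as the group of tensor automorphisms of $\omega$.

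The main obstacle is essential surjectivity: every finite-dimensional $G$-representation must be isomorphic to $\omega(V)$ for some $V \in {\rm Ob}(\mathcal C)$. The key input is the density statement that ${\mathcal O}(G)$, as a $G$-representation, is a filtered colimit of objects of the form $\omega(V)^{\vee}\otimes \omega(V)$, so any finite-dimensional $G$-representation embeds into a finite direct sum of $\omega(V_i)$'s. One then uses full faithfulness together with the abelianness of $\mathcal C$ (so that kernels and cokernels are preserved by the exact functor $\omega$) to lift the defining equivariant projection to an honest subquotient in $\mathcal C$. Finally, the contravariant functoriality of $\Pi$ is formal: a morphism $T : {\mathcal C}_1 \to {\mathcal C}_2$ of neutral Tannakian categories compatible with fiber functors, i.e.\ with $\omega_2 \circ T \cong \omega_1$, induces by restriction a homomorphism of pro-algebraic groups $G_2 = {\rm Aut}^{\otimes}(\omega_2) \to {\rm Aut}^{\otimes}(\omega_2 \circ T) = {\rm Aut}^{\otimes}(\omega_1) = G_1$, which is exactly the construction spelled out in \cite{DM}.
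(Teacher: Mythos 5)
The paper itself offers no proof of this statement: it is quoted verbatim from Deligne--Milne, and your outline is indeed the standard reconstruction argument from that source ($G={\rm Aut}^{\otimes}(\omega)$, representability via the coend, density of $\bigcup\omega(V)^{\vee}\otimes\omega(V)$ in ${\mathcal O}(G)$, and contravariant functoriality by whiskering with $T$, which is fine). So the architecture is right; the problem is where you place the difficulty.

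The genuine gap is in your full-faithfulness step. You reduce surjectivity of ${\rm Hom}_{\mathcal C}(U,V)\to{\rm Hom}_{G}(\omega(U),\omega(V))$ to the claim that the $G$-invariants of $\omega(W)$ equal the image of ${\rm Hom}_{\mathcal C}({\bf 1},W)$, and assert this is ``immediate from the definition of $G$ as the group of tensor automorphisms of $\omega$.'' Only one inclusion is immediate. The other --- that every vector of $\omega(W)$ fixed by all of $G(R)$, for all $R$, comes from a morphism ${\bf 1}\to W$ --- is the statement that $G$ is \emph{large enough}, and it is precisely the heart of Tannakian duality; nothing in the bare definition of ${\rm Aut}^{\otimes}(\omega)$ rules out a priori that $G$ fixes vectors not arising from morphisms. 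In Deligne--Milne this is established only through the comodule-theoretic reconstruction (the construction of the coalgebra as a colimit of the duals ${\rm End}(\omega|_{\langle X\rangle})^{\vee}$ and the proof that $\omega$ induces an equivalence of $\mathcal C$ with its finite-dimensional comodules), which uses abelianness, exactness and faithfulness of $\omega$, and rigidity in an essential way. The same machinery is what proves your ``key input'' density statement for essential surjectivity; as written, your sketch cites the two crucial facts (invariants $=$ morphisms from ${\bf 1}$, and density of the coend) without proof, and in the first case mislabels the main content of the theorem as a triviality. To close the gap you would need to reproduce (or explicitly invoke) the argument of Deligne--Milne, Prop.~2.14 and Thm.~2.11, rather than derive these facts from the definition of $G$.
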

We call $G=\Pi(\mathcal C)$ the Tannakian dual of a neutral  $\mathbb K$-Tannakian category $\mathcal C$.

\begin{example}
Consider the category ${\mathcal MHS}_{\mathbb K}$ of $\mathbb K$-mixed Hodge structures.
Then, for \[(V_{1},W_{\ast}, F^{\ast}),   (V_{2},W_{\ast}, F^{\ast})\in  {\rm Ob}({\mathcal MHS}_{\mathbb K}),\] 
we can define the $\mathbb K$-mixed Hodge structure on 
the tensor product  $V_{1}\otimes V_{2}$ of $\mathbb K$-vector spaces by the standard way.
Thus, ${\mathcal MHS}_{\mathbb K}$ is a $\mathbb K$-tensor category.
Since ${\mathcal MHS}_{\mathbb K}$ is abelian, we can easily say that ${\mathcal MHS}_{\mathbb K}$ is a neutral $\mathbb K$-Tannakian category with the fiber functor ${\rm Ob}({\mathcal MHS}_{\mathbb K})\ni (V,W_{\ast}, F^{\ast})\mapsto V\in  {\rm Ob}({\rm Vect}_{\mathbb K})$.

\end{example}

\begin{example}
Let $\g$ be a $\mathbb K$-Lie algebra.
Then the category ${\rm Rep}(\g)$ (resp. ${\rm Rep}^{nil}(\g)$ with the usual tensor product and the natural functor 
${\rm Rep}(\g)\to {\rm Vect}_{\mathbb K}$ (resp. ${\rm Rep}^{nil}(\g)\to {\rm Vect}_{\mathbb K}$) is a neutral $\mathbb K$-Tannakian category.

It is known that if $\frak n$ is pro-nilpotent, then the Tannakian dual $U=\Pi({\rm Rep}^{nil}(\frak n))$ is a pro-unipotent group  over $\mathbb K$ such that the Lie algebra of $U$ is $\frak n$.
\end{example}

\subsection{Non-abelian mixed Hodge structures}\label{NMHsec}
The following definition is inspired by Arapura's work in \cite{Ara}.
\begin{definition}
A {\em $\mathbb K$-non-abelian mixed Hodge structure} ($\mathbb K$-NMHS) is  an abelian rigid   $\mathbb K$-tensor category $\mathcal C$ with exact faithful tensor functors $\tau_{1}: \mathcal C\to  {\mathcal MHS}_{\mathbb K}$ and $\tau_{2}:   {\mathcal MHS}_{\mathbb K}\to \mathcal C$ satisfying $\tau_{1}\circ \tau_{2}={\rm id}_{ {\mathcal MHS}_{\mathbb K}}$.

\end{definition}
With the composition of $\tau_{1}$ and the fiber functor ${\rm Ob}({\mathcal MHS}_{\mathbb K})\ni (V,W_{\ast}, F^{\ast})\mapsto V\in  {\rm Ob}({\rm Vect}_{\mathbb K})$, $\mathcal C$ is a neutral  $\mathbb K$-Tannakian category.
Considering the Tannakian duals $\Pi(\mathcal C)$ and $\Pi({\mathcal MHS}_{\mathbb K})$, by the morphisms $\Pi(\tau_{1})$ and $\Pi(\tau_{2})$, we have the splitting $\Pi(\mathcal C)=\Pi({\mathcal MHS}_{\mathbb K})\ltimes G$ where 
 $G={\rm ker}\Pi(\tau_{2})$.
 Thus a $\mathbb K$-non-abelian mixed Hodge structure  can be considered as  a pro-algebraic group $G$ over $\mathbb K$
with an algebraic action of $\Pi({\mathcal MHS}_{\mathbb K})$ as defined in \cite{Ara}.

Let   $({\mathcal D},\epsilon_{A},\epsilon_{B})$ be an augmented  $\mathbb K$-mixed-Hodge diagram.
Consider the category  $VMHS^{u}({\mathcal D})$ as in Section \ref{VMH}.
Naturally, we consider $VMHS^{u}({\mathcal D})$ as a tensor category.
We consider the natural faithful functor $\kappa:  {\mathcal MHS}_{\mathbb K}\to VMHS^{u}({\mathcal D})$ so that $\kappa (V, W_{\ast}, F^{\ast})= (V, W_{\ast}, F^{\ast},0,0,{\rm id}_{V})\in  {\rm Ob}(VMHS^{u}({\mathcal D}))$.
Define the functor $\varepsilon :VMHS^{u}({\mathcal D})\to {\mathcal MHS}_{\mathbb K}$ by
\[\varepsilon (V, W_{\ast}, F^{\ast}, \omega,\omega^{\prime}, a)=(V, W_{\ast}, \epsilon_{B}(a)F^{\ast}).
\]
Then $\varepsilon$ is a tensor functor and  $\varepsilon\circ \kappa ={\rm id}_{ {\mathcal MHS}}$.
Thus we can say that $(VMHS^{u}({\mathcal D}), \varepsilon, \kappa)$ is a $\mathbb K$-NMHS

\subsection{Functoriality and Ho-morphisms}
A morphism between $\mathbb K$-mixed-Hodge  diagrams ${\mathcal D}_{1}=\{(A^{*}_{1}, W_{*})\to^{\iota_{1}}(B^{*}_{1}, W_{*},F^{*})\}$ and ${\mathcal D}_{2}=\{(A^{*}_{2}, W_{*})\to^{\iota_{2}}(B^{*}_{2}, W_{*},F^{*})\}$ is 
$(f,g, H)$ such that
\begin{itemize}
\item $f:(A_{1}^{\ast},W_{\ast})\to (A_{2}^{\ast},W_{\ast})$ is a morphism of filtered  $\mathbb K$-DGAs.
\item $g:(B_{1}^{\ast},W_{\ast},F^{\ast})\to (B_{2}^{\ast},W_{\ast},F^{\ast})$  is a morphism of bifiltered  $\mathbb C$-DGAs.
\item the equation $\iota_{2}\circ f=g\circ \iota_{1}$  holds.
\end{itemize}
A morphism between augmented  $\mathbb K$-mixed-Hodge  diagrams $({\mathcal D}_{1},\epsilon_{A_{1}},\epsilon_{B_{1}})$ and $({\mathcal D}_{2},\epsilon_{A_{2}},\epsilon_{B_{2}})$ is a morphism 
$(f,g)$ between ${\mathcal D}_{1}$ and ${\mathcal D}_{2}$ so that $ \epsilon_{A_{2}}\circ f=\epsilon_{A_{1}}$,  $ \epsilon_{B_{2}}\circ g=\epsilon_{B_{1}}$ and  $ \epsilon_{B_{2}}\otimes {\rm id}_{(t,dt)}\circ H=\epsilon_{A_{1}}$.
Corresponding to a morphism $(f,g)$ between augmented  $\mathbb K$-mixed-Hodge  diagrams $({\mathcal D}_{1},\epsilon_{A_{1}},\epsilon_{B_{1}})$ and $({\mathcal D}_{2},\epsilon_{A_{2}},\epsilon_{B_{2}})$, we functorially obtain the tensor functor $T_{(f,g)}:VMHS^{u}({\mathcal D}_{1})\to VMHS^{u}({\mathcal D}_{2})$ given by 
\[{\rm Ob}(VMHS^{u}({\mathcal D}_{1}))\ni 
(V, W_{\ast}, F^{\ast}, \omega, \omega^{\prime}, a)\mapsto
(V, W_{\ast}, F^{\ast}, f(\omega), g(\omega^{\prime}), g(a))\in {\rm Ob}(VMHS^{u}({\mathcal D}_{2}))
\]
and the maps
 $(b, b^{\prime})\mapsto (f(b), g(b^{\prime}))$ between  morphisms such that  the diagram
\[\xymatrix{
VMHS^{u}({\mathcal D}_{1})\ar@<1.0 mm>[rr]^{\varepsilon_{1}}\ar[d]^{T_{(f,g)}}&&\ar@<1.0mm>[ll]^{\kappa_{1}}{\mathcal MHS}_{\mathbb K}\ar[d]^{=}\\
 VMHS^{u}({\mathcal D}_{2})\ar@<1.0 mm>[rr]^{\varepsilon_{2}}&&\ar@<1.0 mm>[ll]^{\kappa_{2}}{\mathcal MHS}_{\mathbb K}
}
\]
commutes.

A ho-morphism between $\mathbb K$-mixed-Hodge  diagrams ${\mathcal D}_{1}=\{(A^{*}_{1}, W_{*})\to^{\iota_{1}}(B^{*}_{1}, W_{*},F^{*})\}$ and ${\mathcal D}_{2}=\{(A^{*}_{2}, W_{*})\to^{\iota_{2}}(B^{*}_{2}, W_{*},F^{*})\}$ is 
$(f,g, H)$ such that
\begin{itemize}
\item $f:(A_{1}^{\ast},W_{\ast})\to (A_{2}^{\ast},W_{\ast})$ is a morphism of filtered  $\mathbb K$-DGAs.
\item $g:(B_{1}^{\ast},W_{\ast},F^{\ast})\to (B_{2}^{\ast},W_{\ast},F^{\ast})$  is a morphism of bifiltered  $\mathbb C$-DGAs.
\item $H:A_{1\C}^{\ast}\to B_{2}\otimes (t,dt)$ is a  homotopy from $\iota_{2}\circ f$ to $g\circ \iota_{1}$  which is compatible with the filtrations $W_{\ast}$.
\end{itemize}
\begin{remark}
In \cite{Mor}, Morgan originally considers  ho-morphisms as morphisms of  $\mathbb K$-mixed-Hodge  diagrams.
But Ho-morphisms can not be composed in general.
Hence, they do not define a category.
Nevertheless, they are very important see \cite{Cir}.
\end{remark}
We extend the construction of the tensor functor $T_{(f,g)}$ to ho-morphisms.
Since Ho-morphisms can not be composed, this construction is not functorial unlike morphisms.
But the construction is interesting and may be useful for further studies.

A ho-morphism between augmented  $\mathbb K$-mixed-Hodge  diagrams $({\mathcal D}_{1},\epsilon_{A_{1}},\epsilon_{B_{1}})$ and $({\mathcal D}_{2},\epsilon_{A_{2}},\epsilon_{B_{2}})$ is a ho-morphism 
$(f,g, H)$ between ${\mathcal D}_{1}$ and ${\mathcal D}_{2}$ so that $ \epsilon_{A_{2}}\circ f=\epsilon_{A_{1}}$,  $ \epsilon_{B_{2}}\circ g=\epsilon_{B_{1}}$ and  $ \epsilon_{B_{2}}\otimes {\rm id}_{(t,dt)}\circ H=\epsilon_{A_{1}}$.
For $(V, W_{\ast}, F^{\ast}, \omega, \omega^{\prime}, a)\in {\rm Ob}(VMHS^{u}({\mathcal D}_{1}))$,
  applying   Proposition \ref{gauge} for $\tilde\omega  =H(\omega)$, we have a canonical  $\check{a} \in Id_{V_{\C}}+ {\rm ker}\epsilon_{B_{2}}\otimes W_{-1}({\rm End}(V_{\C}))$ such that $g\circ \iota_{1}(\omega)=\check{a}\iota_{2}\circ f(\omega) \check{a}^{-1}-d\check{a}\check{a}^{-1}$.
We have
\[d(\check{a}^{-1}g(a))=\check{a}^{-1}g(a)g(\omega^{\prime})-\iota_{2}\circ f(\omega)\check{a}^{-1}g(a).
\]
Thus, we have $(V, W_{\ast}, F^{\ast}, f(\omega), g(\omega^{\prime}), \check{a}^{-1}g(a))\in {\rm Ob}(VMHS^{u}({\mathcal D}_{2}))$.
By $\epsilon \circ g\circ \iota_{1}= \epsilon \circ  \iota_{2}\circ f$,
 corresponding to a ho-morphism $(f,g, H)$ between augmented  $\mathbb K$-mixed-Hodge  diagrams $({\mathcal D}_{1},\epsilon_{A_{1}},\epsilon_{B_{1}})$ and $({\mathcal D}_{2},\epsilon_{A_{2}},\epsilon_{B_{2}})$, we have the tensor functor $T_{(f,g, H)}:VMHS^{u}({\mathcal D}_{1})\to VMHS^{u}({\mathcal D}_{2})$ given by 
\[{\rm Ob}(VMHS^{u}({\mathcal D}_{1}))\ni 
(V, W_{\ast}, F^{\ast}, \omega, \omega^{\prime}, a)\mapsto
(V, W_{\ast}, F^{\ast}, f(\omega), g(\omega^{\prime}), \check{a}^{-1}g(a))\in {\rm Ob}(VMHS^{u}({\mathcal D}_{2}))
\]
and the maps
 $(b, b^{\prime})\mapsto (f(b), g(b^{\prime}))$ between  morphisms such that  the diagram
\[\xymatrix{
VMHS^{u}({\mathcal D}_{1})\ar@<1.0 mm>[rr]^{\varepsilon_{1}}\ar[d]^{T_{(f,g, H)}}&&\ar@<1.0mm>[ll]^{\kappa_{1}}{\mathcal MHS}_{\mathbb K}\ar[d]^{=}\\
 VMHS^{u}({\mathcal D}_{2})\ar@<1.0 mm>[rr]^{\varepsilon_{2}}&&\ar@<1.0 mm>[ll]^{\kappa_{2}}{\mathcal MHS}_{\mathbb K}
}
\]
commutes.
For a  morphism $(b,b^{\prime})$ from $(V_{1}, W_{\ast} ,F^{\ast}, \omega_{1},\omega_{1}^{\prime}, a_{1})$ to  $(V_{2}, W_{\ast} ,F^{\ast}, \omega_{2},\omega_{2}^{\prime}, a_{2})$ in $VMHS^{u}({\mathcal D}_{1})$,
we should check $(f(b),g(b^{\prime}))$ is a morphism 
\[{\rm from}\qquad (V_{1}, W_{\ast} ,F^{\ast}, f(\omega_{1}), g(\omega_{1}^{\prime}), \check{a}^{-1}_{1}g(a_{1}))\qquad {\rm
 to}\qquad(V_{2}, W_{\ast} ,F^{\ast}, f(\omega_{2}), g(\omega_{2}^{\prime}), \check{a}^{-1}_{2}g(a_{2}))\] in $VMHS^{u}({\mathcal D}_{2})$.
 So we check  the diagram 
 \[\xymatrix{
(V_{1\C},W_{\ast},\iota( f(\omega_{1})))\ar[r]^{\iota(f(b))}&(V_{2\C},W_{\ast}, \iota(f(\omega_{2})))\\
(V_{1\C},W_{\ast}, g(\omega_{1}^{\prime}))\ar[r]^{g(b^{\prime})}\ar[u]^{\check{a}^{-1}_{1}g(a_{1})}&(V_{2\C},W_{\ast}, g(\omega_{2}^{\prime}))\ar[u]^{\check{a}^{-1}_{2}g(a_{2})}
}
\]
commutes.
By $\check{a}_{1}, \check{a}_{2} \in Id_{V_{\C}}+ {\rm ker}\epsilon_{B_{2}}\otimes W_{-1}({\rm End}(V_{\C}))$, we have
\begin{multline*}
\epsilon_{B_{2}}(\iota(f(b))\check{a}^{-1}_{1}g(a_{1}))=\epsilon_{B_{2}}(\iota(f(b))g(a_{1}))\\
=\epsilon_{A_{1}}(b)\epsilon_{B_{1}}(a_{1})=\epsilon_{B_{1}}(\iota(b)a_{1})=\epsilon_{B_{1}}(a_{2}b^{\prime})\\
=\epsilon_{B_{2}}(g(a_{2})g(b^{\prime}))=\epsilon_{B_{2}}(g(a_{2}) \check{a}_{2}^{-1}g(b^{\prime})).
\end{multline*}
This implies $\iota(f(b))\check{a}^{-1}_{1}g(a_{1})=g(a_{2}) \check{a}_{2}^{-1}g(b^{\prime})$.
Hence $T_{(f,g, H)}:VMHS^{u}({\mathcal D}_{1})\to VMHS^{u}({\mathcal D}_{2})$ is indeed a functor.

A ho-morphism $(f,g, H)$ between augmented $\mathbb K$-mixed-Hodge  diagrams is quasi-isomorphism (resp. $1$-quasi-isomorphism) if $f$ and $g$ are quasi-isomorphisms (resp. $1$-quasi-isomorphisms).
The following proposition is easily proved by the essentially same argument in the proof of Theorem \ref{EQVMH}.
\begin{proposition}
If a ho-morphism $(f,g, H)$ between augmented  $\mathbb K$-mixed-Hodge  diagrams  ${\mathcal D}_{1}=\{(A^{*}_{1}, W_{*})\to^{\iota_{1}}(B^{*}_{1}, W_{*},F^{*})\}$ and ${\mathcal D}_{2}=\{(A^{*}_{2}, W_{*})\to^{\iota_{2}}(B^{*}_{2}, W_{*},F^{*})\}$ is a $1$-quasi-isomorphism, then  the tensor functor $T_{(f,g, H)}:VMHS^{u}({\mathcal D}_{1})\to VMHS^{u}({\mathcal D}_{2})$
is an equivalence and we can take a quasi-inverse $T_{(f,g, H)}^{\prime}:VMHS^{u}({\mathcal D}_{2})\to VMHS^{u}({\mathcal D}_{1})$ such that 
 the diagram
\[\xymatrix{
VMHS^{u}({\mathcal D}_{1})\ar@<1.0 mm>[rr]^{\varepsilon_{1}}\ar[d]^{T_{(f,g, H)}}&&\ar@<1.0mm>[ll]^{\kappa_{1}}{\mathcal MHS}_{\mathbb K}\ar[d]^{=}\\
 VMHS^{u}({\mathcal D}_{2})\ar@<1.0 mm>[rr]^{\varepsilon_{2}}\ar@<2.0mm>[u]^{T^{\prime}_{(f,g, H)}}&&\ar@<1.0 mm>[ll]^{\kappa_{2}}{\mathcal MHS}_{\mathbb K}
}
\]
commutes.
\end{proposition}

\begin{example}
Let $M=\overline{M}-D$ and $M^{\prime}=\overline{M}^{\prime}-D^{\prime}$ be two quasi-K\"ahler manifolds  and
 $f: \overline{M}\to \overline{M}^{\prime}$ be a holomorphic map so that $f(M)\subset M^{\prime}$.
We have the pull-back $f^{\ast}:{\mathcal A}^{\ast}(M^{\prime} ,{\rm Log}(D^{\prime}))\to {\mathcal A}^{\ast}(M ,{\rm Log}(D))$(resp.  $f^{\ast}:{\mathcal E}^{\ast}(M^{\prime} ,{\rm Log}(D^{\prime}))\to {\mathcal E}^{\ast}(M ,{\rm Log}(D))$) and hence we have the  morphism $(f^{\ast},f^{\ast})$ between $\mathbb R$-mixed Hodge diagrams ${\mathcal D}({\mathcal A}^{\ast}(M ,{\rm Log}(D)))$ and ${\mathcal D}({\mathcal A}^{\ast}(M^{\prime} ,{\rm Log}(D^{\prime})))$ (resp ${\mathcal D}({\mathcal E}^{\ast}(M ,{\rm Log}(D)))$ and ${\mathcal D}({\mathcal E}^{\ast}(M^{\prime} ,{\rm Log}(D^{\prime})))$.

For $x\in M$ and  $x^{\prime}\in M^{\prime}$, if $f(x)=x^{\prime}$, then we have the morphism between augmented $\mathbb R$-mixed Hodge diagrams 
\[({\mathcal D}({\mathcal A}^{\ast}(M ,{\rm Log}(D)),\epsilon_{x},\epsilon_{x}) ) \qquad ({\rm resp.} ({\mathcal D}({\mathcal E}^{\ast}(M ,{\rm Log}(D)),\epsilon_{x},\epsilon_{x}) )\]  and
 \[\left({\mathcal D}({\mathcal A}^{\ast}(M^{\prime} ,{\rm Log}(D^{\prime})), \epsilon_{x^{\prime}},\epsilon_{x^{\prime}}\right)\qquad ({\rm resp.} \left({\mathcal D}({\mathcal E}^{\ast}(M^{\prime} ,{\rm Log}(D^{\prime})), \epsilon_{x^{\prime}},\epsilon_{x^{\prime}}\right)).\]

Since the inclusion ${\mathcal A}^{\ast}(M ,{\rm Log}(D))\subset {\mathcal E}^{\ast}(M ,{\rm Log}(D))$ is a quasi-isomorphism, we have the quasi-isomorphism between augmented $\mathbb R$-mixed Hodge diagrams $({\mathcal D}({\mathcal A}^{\ast}(M ,{\rm Log}(D)),\epsilon_{x},\epsilon_{x})$ and $({\mathcal D}({\mathcal E}^{\ast}(M ,{\rm Log}(D)),\epsilon_{x},\epsilon_{x})$.

\end{example}

\subsection{The main statement}
Let   $({\mathcal D},\epsilon_{A},\epsilon_{B})$ be an augmented  $\mathbb K$-mixed-Hodge diagram.
We consider the  ${\rm Rep}({\frak n}, W_{\ast},F^{\ast}) $ as in Section \ref{eqeq2} associated with a Morgan mixed Hodge structure of an augmented  $\mathbb K$-mixed-Hodge diagram  $({\mathcal D},\epsilon_{A},\epsilon_{B})$ as in Section \ref{IHH}.
We define the functors $\tau_{1}: {\rm Rep}({\frak n}, W_{\ast},F^{\ast})\to  {\mathcal MHS}_{\mathbb K}$ and $\tau_{2}:   {\mathcal MHS}_{\mathbb K}\to {\rm Rep}({\frak n}, W_{\ast},F^{\ast})$ such that 
\[\tau_{1}: {\rm Ob}({\rm Rep}({\frak n}, W_{\ast},F^{\ast}))\ni (V, W_{\ast}, F^{\ast}, \Omega)\mapsto (V, W_{\ast}, F^{\ast})\in  {\rm Ob}({\mathcal MHS}_{\mathbb K})\]
 and  \[\tau_{2}: {\rm Ob}({\mathcal MHS}_{\mathbb K})\ni (V, W_{\ast}, F^{\ast})\to (V, W_{\ast}, F^{\ast},0)\in {\rm Ob}({\rm Rep}({\frak n}, W_{\ast},F^{\ast})).\]
 Since every morphism in ${\rm Rep}({\frak n}, W_{\ast},F^{\ast})$ is a morphism of  $\mathbb K$-mixed-Hodge structures,
 $\tau_{1}$ is indeed  a functor.
 Since  morphisms from $(V_{1}, W_{\ast}, F^{\ast},0)$ to $(V_{2}, W_{\ast}, F^{\ast},0)$ in ${\rm Rep}({\frak n}, W_{\ast},F^{\ast})$ are just  morphisms of  $\mathbb K$-mixed-Hodge structures,  $\tau_{2}$ is indeed  a functor.
 Then ${\rm Rep}({\frak n}, W_{\ast},F^{\ast}) $ with these functors is  a $\mathbb K$-NMHS.
Thus by the Theorem \ref{EQVMH}, we obtain the following statement.
\begin{theorem}\label{NMth}
Let   $({\mathcal D},\epsilon_{A},\epsilon_{B})$ be an augmented  $\mathbb K$-mixed-Hodge diagram.
Then the $\mathbb K$-NMHS $({\rm Rep}({\frak n}, W_{\ast},F^{\ast}),\tau_{1},\tau_{2}) $ associated with a Morgan mixed Hodge structure of an augmented  $\mathbb K$-mixed-Hodge diagram  $({\mathcal D},\epsilon_{A},\epsilon_{B})$   is equivalent to 
the $\mathbb K$-NMHS $(VMHS^{u}({\mathcal D}), \varepsilon, \kappa)$. 

\end{theorem}

\begin{remark}\label{funn}
It is   difficult to extend morphisms between augmented  $\mathbb K$-mixed-Hodge  diagrams to morphisms between systems of Morgan mixed Hodge structures as in Theorem \ref{Morgan}. 
Thus we can not precisely consider Morgan mixed Hodge structures of augmented  $\mathbb K$-mixed-Hodge  diagrams as functorial invariants but by Theorem \ref{NMth} we can consider them as representatives of functorial invariants $(VMHS^{u}({\mathcal D}), \varepsilon, \kappa)$ of  $\mathbb K$-mixed-Hodge  diagrams.

\end{remark}

\section{On compact K\"ahler manifolds}\label{cpkah}
We study the canonical $\mathbb R$-mixed-Hodge  diagrams associated with compact K\"ahler manifolds  and their $\R$-NMHS in detail.
We see the relation between   Morgan mixed Hodge structures of  augmented mixed Hodge diagrams on compact K\"ahler manifolds  and the mixed Hodge structures fundamental groups of compact K\"ahler manifolds as in \cite{HainI}.
Moreover,  we see that  we can explicitly construct  Morgan mixed Hodge structures as functorial invariants of K\"ahler manifolds with points in contrast to Remark \ref{funn}.

\subsection{Real Variations of mixed Hodge structures}\label{vmfun}
Let $M$ be a complex manifold (not necessarily compact or K\"ahler). 
A {\em real variation of mixed Hodge structure} ($\R$-VMHS)  over  $M$ is  $({\bf E}, {\bf W}_{\ast}, {\bf F}^{\ast})$ so that:
\begin{enumerate}
\item ${\bf E}$ is a local system of  finite-dimensional $\R$-vector spaces.
\item ${\bf W}_{\ast}$ is an increasing filtration of the local system ${\bf E}$.
\item  ${\bf F}^{\ast}$ is a decreasing filtration of the holomorphic vector bundle ${\bf E}\otimes_{\R}{\mathcal O}_{M}$.
\item The Griffiths transversality $D{\bf F}^{r}\subset A^{1}(M, {\bf F}^{r-1})$ holds where $D$ is the flat connection associated with the local system ${\bf E}_{ \C}$.
\item  For any $x\in M$, the fiber $({\bf E}_{x}, {\bf W}_{\ast x}, {\bf F}^{\ast}_{x})$ at $x$ is a $\R$-mixed-hodge structure.
\end{enumerate}
We call $({\bf E}, {\bf W}_{\ast}, {\bf F}^{\ast})$  unipotent if  the variation  on each $Gr_{k}^{\bf W}(\bf E) $ is constant (i.e. the local system is trivial and the holomorphic sub-bundles of $Gr_{k}^{\bf W}(\bf E\otimes_{\R}{\mathcal O}_{M}) $ associated with $\bf F$ are trivial bundles).

We consider the category $VMHS_{\R}(M)$ of $\R$-VMHSs over $M$ such that morphisms from $({\bf E}_{1}, {\bf W}_{\ast}, {\bf F}^{\ast})$ to $({\bf E}_{2}, {\bf W}_{\ast}, {\bf F}^{\ast})$ are  flat sections in  which are valued in ${\bf W}_{0}({\rm Hom}({\bf E}_{1}, {\bf E}_{2}))$  and 
${\bf F}^{0}({\rm Hom}({\bf E}_{1}, {\bf E}_{2})_{\C})$.
We define the full-subcategory $VMHS^{u}_{\R}(M)$ so that objects are unipotent $\R$-VMHSs.
For each $x\in M$,  define the functor $\epsilon_{x}: VMHS^{u}_{\R}(M)\to {\mathcal MHS}_{\mathbb R}$ as the taking fiber at $x$
\[\epsilon_{x}:{\rm Ob}(VMHS^{u}_{\R}(M))\ni ({\bf E}, {\bf W}_{\ast}, {\bf F}^{\ast})\mapsto ({\bf E}_{x}, {\bf W}_{\ast x}, {\bf F}_{x}^{\ast})\in {\rm Ob}({\mathcal MHS}_{\mathbb R}).
\]
We also define the functor  $\kappa^{\prime}: {\mathcal MHS}_{\mathbb R}\to VMHS^{u}_{\R}(M)$  such that for
$(V,W_{\ast},F^{\ast}) \in  {\rm Ob}({\mathcal MHS}_{\mathbb R})$, $\kappa^{\prime}((V,W_{\ast},F^{\ast}))$ is the trivial flat bundle $M\times V$ with the filtrations $M\times W_{\ast}$ and  $M\times F^{\ast}$.

On the de Rham complex $A^{\ast}(M)$ with the bigrading $A^{n}(M)\otimes \C=\bigoplus_{p+q=n} A^{p,q}(M)$,
consider    the Hodge filtration $F^{\ast}$ on $A^{\ast}(M)\otimes \C$ i.e. $F^{r}(A^{\ast}(M)\otimes \C)=\bigoplus_{p\ge r} A^{p,q}(M)$.
We also define the category $\widetilde{VMHS}^{u}_{\R}(M)$ 
such that:
\begin{itemize}
\item Objects are $(V, W_{\ast}, F^{\ast}, \omega,\omega^{\prime}, a)$ so that:
\begin{itemize}
\item $(W_{\ast}, F^{\ast})$ is a $\mathbb R$-mixed Hodge structure on a finite-dimensional $\mathbb R$-vector space $V$
\item $\omega\in {\rm Ob}({\mathcal F}^{Wnil}(A^{\ast}(M)))$.
\item  $\omega^{\prime}\in {\rm Ob}({\mathcal F}^{Wnil}(A^{\ast}(M)\otimes \C))$ satisfying \[\omega^{\prime} \in F^{0}\left(A^{\ast}(M)\otimes \C\otimes W_{-1}({\rm End}(V_{\C}))\right).\]
\item $a\in {\rm Hom}((V_{\C}, W_{\ast}, \omega^{\prime})), (V_{\C}, W_{\ast},\omega)$ so that $a\in {\rm Id}_{V}+A^{0}(M)\otimes \C\otimes W_{-1}({\rm End}(V_{\C})$.

\end{itemize}
\item   Morphisms from $(V_{1}, W_{\ast} ,F^{\ast}, \omega_{1},\omega_{1}^{\prime}, a_{1})$ to  $(V_{2}, W_{\ast} ,F^{\ast}, \omega_{2},\omega_{2}^{\prime}, a_{2})$ are $(b,b^{\prime})$ 
so that
\begin{itemize}
\item $b\in {\rm Hom}((V_{1},W_{\ast}, \omega_{1}), (V_{2},W_{\ast},\omega_{2})$ in ${\mathcal F}^{Wnil}(A^{\ast}(M))$.
\item $b^{\prime}\in {\rm Hom}((V_{1\C}, W_{\ast}, \omega^{\prime}_{1}),(V_{2\C}, W_{\ast},\omega^{\prime}_{2})$ satisfying 
\[
b^{\prime}\in F^{0}\left(A^{\ast}(M)\otimes \C \otimes  W_{0}({\rm Hom}(V_{1\C}, V_{2\C}))\right) .\]
\item The diagram 
 \[\xymatrix{
(V_{1\C},W_{\ast}, \omega_{1})\ar[r]^{b}&(V_{2\C},W_{\ast}, \omega_{2})\\
(V_{1\C},W_{\ast}, \omega_{1}^{\prime})\ar[r]^{b^{\prime}}\ar[u]^{a_{1}}&(V_{2\C},W_{\ast}, \omega_{2}^{\prime})\ar[u]^{a_{2}}
}
\]
commutes.
\end{itemize}
\end{itemize}
Then each $(V, W_{\ast}, F^{\ast}, \omega,\omega^{\prime}, a)\in {\rm Ob}(\widetilde{VMHS}^{u}_{\R}(M)) $ corresponds to the $\R$-VMHS $({\bf E}, {\bf W}_{\ast}, {\bf F}^{\ast})\in {\rm Ob}(VMHS^{u}_{\R}(M))$
such that ${\bf E}$ is the trivial $C^{\infty}$-vector bundle $M\times V$ with the flat connection $d+\omega$, ${\bf W}_{\ast}$ is defined by  ${\bf W}_{i}=M\times   W_{i}(V)$ and ${\bf F}^{\ast}$ is defined by  ${\bf F}^{r}=a(M\times F^{r}(V_{\C}))$ (see \cite[Section 7]{Kasuya}).
By the definition, this correspondence $\Upsilon:  {\rm Ob}(\widetilde{VMHS}^{u}_{\R}(M)\to {\rm Ob}(VMHS^{u}_{\R}(M)$ is a fully-faithful functor.
Taking global frames, every $({\bf E}, {\bf W}_{\ast}, {\bf F}^{\ast})\in {\rm Ob}(VMHS^{u}_{\R}(M))$ is isomorphic to $\Upsilon(V, W_{\ast}, F^{\ast}, \omega,\omega^{\prime}, a)$ for some $(V, W_{\ast}, F^{\ast}, \omega,\omega^{\prime}, a)\in {\rm Ob}(\widetilde{VMHS}^{u}_{\R}(M)) $.
Thus, the functor $\Upsilon:  \widetilde{VMHS}^{u}_{\R}(M))\to VMHS^{u}_{\R}(M))$  is an equivalence.
For  $(V, W_{\ast}, F^{\ast}, \omega,\omega^{\prime}, a)\in {\rm Ob}(\widetilde{VMHS}^{u}_{\R}(M)) $,  we have 
\[\epsilon_{x}\circ \Upsilon(V, W_{\ast}, F^{\ast}, \omega,\omega^{\prime}, a)= (V, W_{\ast}, a(x)F^{\ast})
\]
where $a(x)\in {\rm Aut}_{1}(V_{\C}, W_{\ast})$ is the value at $x$.

\subsection{Mixed Hodge diagrams over compact K\"ahler manifolds}\label{KAMD}

Let $M$ be a compact complex manifold admitting  a K\"ahler metric $g$.
 We consider the $\R$-mixed-Hodge diagram
\[\mathcal{D}(M)=\{(A^{*}(M), W_{*})\to^{\rm id}(A^{\ast}(M)\otimes \C, W_{*},F^{*})\}\]
 as in Example \ref{kah}.
We notice that $\widetilde{VMHS}^{u}_{\R}(M)=VMHS^{u}(\mathcal{D}(M))$.

Let $x\in M$.
Then we have the augmentation $\epsilon_{x}:A^{\ast}(M)\to \R$ so that $\epsilon_{x}(f)=f(x)$ for $f\in A^{0}(M)$.
We consider the augmented $\mathbb R$-mixed Hodge diagram $(\mathcal{D}(M), \epsilon_{x},\epsilon_{x})$.

By Theorem \ref{NMth} and Subsection \ref{vmfun}, we have:
\begin{theorem}\label{ppptNH}

The $\R$-NMHS $({\rm Rep}({\frak n}, W_{\ast},F^{\ast}),\tau_{1},\tau_{2}) $ associated with a Morgan mixed Hodge structure of an augmented  $\R$-mixed-Hodge diagram  $(\mathcal{D}(M), \epsilon_{x},\epsilon_{x})$ 
 is equivalent to  the $\R$-NMHS $(VMHS^{u}_{\R}(M), \epsilon_{x}, \kappa^{\prime})$.
\end{theorem}

For the purpose explained in Introduction, we construct an explicit filtered $1$-minimal model and bigraded $1$-minimal model.

\subsection{Canonical $1$-minimal models (\cite{Kasuya, KAB})}
Let $M$ be a compact complex manifold admitting  a K\"ahler metric $g$.
On the de Rham complex $A^{\ast}(M)$ with the bigrading $A^{n}(M)\otimes \C=\bigoplus_{p+q=n} A^{p,q}(M)$, we 
consider the differential operators $d$, $d^{c}$, $\partial$ and $\bar{\partial}$.
Then we have the following relations (see \cite[Lemma 5.11]{DGMS}).
\begin{proposition}\label{DDc}
\begin{description}
\item[($dd^{c}$-Lemma)]
On each $A^{i}(M)$, we have
\[{\rm im}d\cap {\rm ker}d^{c}={\rm ker}d\cap {\rm im}d^{c}={\rm im}dd^{c}.
\]
\item[($\partial\bar\partial$-Lemma)]
On each $A^{p,q}(M)$,
we have
\[{\rm im}\partial\cap {\rm ker}\bar\partial={\rm ker}\partial\cap {\rm im}\bar\partial={\rm im}\partial\bar\partial.
\]
\end{description}
\end{proposition}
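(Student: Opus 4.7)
The plan is to establish the $\partial\bar\partial$-lemma on each $A^{p,q}(M)$ first and then deduce the $dd^c$-lemma from it via the identity $dd^c = 2i\,\partial\bar\partial$ (using the convention $d^c = i(\bar\partial-\partial)$) combined with a bidegree decomposition.

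The analytic input is the standard Kähler package on $(M,g)$, which I would quote from a Hodge-theoretic reference: the Kähler identities $[\Lambda,\partial] = i\bar\partial^{*}$ and $[\Lambda,\bar\partial] = -i\partial^{*}$, their corollary $\Delta_d = 2\Delta_\partial = 2\Delta_{\bar\partial}$, the anticommutations $\partial\bar\partial^{*} + \bar\partial^{*}\partial = 0$ and $\bar\partial\partial^{*} + \partial^{*}\bar\partial = 0$, and the fact that the harmonic projector $H$ and the Green's operator $G$ attached to this common Laplacian coincide for the three theories and commute with each of $\partial,\bar\partial,\partial^{*},\bar\partial^{*}$.

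For the nontrivial inclusion ${\rm ker}\,\partial \cap {\rm im}\,\bar\partial \subseteq {\rm im}\,\partial\bar\partial$ in the $\partial\bar\partial$-lemma, I would proceed as follows. Given $\alpha = \bar\partial\beta$ with $\partial\alpha = 0$, Hodge-decompose $\beta$ via $\Delta_\partial$ as $\beta = H\beta + \partial\partial^{*}G\beta + \partial^{*}\partial G\beta$ and apply $\bar\partial$. The harmonic term vanishes because $H\beta$ is $d$-harmonic and therefore also $\bar\partial$-closed; the middle term becomes $-\partial\bar\partial\partial^{*}G\beta$ by $\bar\partial\partial = -\partial\bar\partial$; and the last term rewrites, via $\bar\partial\partial^{*} = -\partial^{*}\bar\partial$ together with $[\partial,G] = [\bar\partial,G] = 0$, as $\partial^{*}\partial G\alpha$, which is zero because $\partial\alpha = 0$. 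Hence $\alpha = \partial\bar\partial\gamma$ with $\gamma = -\partial^{*}G\beta$. The inclusion ${\rm ker}\,\bar\partial \cap {\rm im}\,\partial \subseteq {\rm im}\,\partial\bar\partial$ is the symmetric statement obtained by swapping $\partial \leftrightarrow \bar\partial$, and the reverse containments ${\rm im}\,\partial\bar\partial \subseteq {\rm ker}\,\partial \cap {\rm im}\,\bar\partial$ etc.\ are immediate from $\partial\bar\partial = -\bar\partial\partial$.

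For the $dd^c$-lemma on $A^{i}(M)$: given $\alpha \in {\rm im}\,d \cap {\rm ker}\,d^c$, I would decompose by bidegree $\alpha = \sum_{p+q=i}\alpha^{p,q}$. Expanding $d = \partial + \bar\partial$ and $d^c = i(\bar\partial - \partial)$ and matching bidegrees, the combined conditions $d\alpha = 0 = d^c\alpha$ force $\partial\alpha^{p,q} = 0 = \bar\partial\alpha^{p,q}$ for every $(p,q)$. Since $\alpha$ is $d$-exact, the Kähler Hodge decomposition $H^{i}(M,\C) \cong \bigoplus_{p+q=i}H^{p,q}_{\bar\partial}(M)$ forces each pure-type piece $\alpha^{p,q}$ to be $\bar\partial$-exact, so that $\alpha^{p,q} \in {\rm ker}\,\partial \cap {\rm im}\,\bar\partial$. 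Applying the $\partial\bar\partial$-lemma bidegree by bidegree, summing, and using $dd^c = 2i\,\partial\bar\partial$ gives $\alpha \in {\rm im}\,dd^c$; for real coefficients one takes real parts throughout. The main obstacle is really the Kähler identity $\bar\partial\partial^{*} = -\partial^{*}\bar\partial$, from which everything else follows formally; the bidegree bookkeeping in reducing $dd^c$ to $\partial\bar\partial$ is routine.
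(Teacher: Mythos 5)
Your proof is correct, and in fact the paper does not prove this proposition at all: it simply quotes it from Deligne--Griffiths--Morgan--Sullivan (\cite[Lemma 5.11]{DGMS}), and your argument --- Hodge decomposition with respect to the common Laplacian plus the K\"ahler anticommutation $\bar\partial\partial^{*}=-\partial^{*}\bar\partial$, then reduction of $dd^{c}$ to $\partial\bar\partial$ by bidegree --- is precisely the standard proof underlying that citation. The only loose ends are cosmetic: you verify explicitly only the inclusion ${\rm im}\,d\cap{\rm ker}\,d^{c}\subset{\rm im}\,dd^{c}$, with ${\rm ker}\,d\cap{\rm im}\,d^{c}$ following by the symmetric argument (using that $d^{c}$ has the same Laplacian, so $d^{c}$-exactness likewise kills the harmonic part), and for real forms the potential one extracts from $\alpha=2i\,\partial\bar\partial\gamma$ is the imaginary part of $\gamma$ (up to a constant) rather than literally its real part, since $dd^{c}$ is a real operator; both points are routine.
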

We notice that on the $1$-forms, we have
\[{\rm im}d\cap {\rm ker}d^{c}={\rm ker}d\cap {\rm im}d^{c}=0
\]
and 
\[{\rm im}\partial\cap {\rm ker}\bar\partial={\rm ker}\partial\cap {\rm im}\bar\partial=0.
\]
By this, by these relations on $2$-forms and $1$-forms, for a $2$-form $\alpha$ satisfying $\alpha\in {\rm im}d\cap {\rm ker}d^{c}$(resp. $\alpha\in{\rm ker}\partial\cap {\rm im}\bar\partial$), we have a function $f$ such that $\alpha=dd^{c}f$ (resp. $\alpha=\partial\bar\partial f$) and the $1$-form $d^{c}f$ (resp. $\partial f$) is uniquely determined by $\alpha$ that is to say
$f$ is determined up to an additive constant.
We will denote such $1$-form by $d^{c}F^{c}(\alpha)$ (resp.  $\partial F^{\prime}(\alpha)$).

We consider the sub-DGA ${\rm ker}d^{c}\subset A^{\ast}(M)$ (resp. ${\rm ker}\partial\subset A^{\ast}(M)\otimes \C$).
We regard the cohomologies $H_{d^{c}}^{\ast}(M)$ and $H_{\partial}^{\ast}(M)$ as DGAs with the  differential operators $d$.
By these two relations,
we have the following (see \cite[Section 6]{DGMS}).
\begin{theorem}
\begin{itemize}
\item The inclusions 
\[{\rm ker}d^{c} \subset A^{\ast}(M)  \qquad {\rm and }\qquad  {\rm ker}\partial\subset  A^{\ast}(M)\otimes\C\]
are quasi-isomorphisms.
\item The quotients 
\[{\rm ker}d^{c} \to H^{\ast}_{d^{c}}(M)  \qquad {\rm and }\qquad  {\rm ker}\partial\to  H^{\ast}_{\partial}(M)\]
are quasi-isomorphisms.
\item $d=0$ on $H_{d^{c}}^{\ast}(M)$ and $H_{\partial}^{\ast}(M)$ and hence $H_{d^{c}}^{\ast}(M)\cong H^{\ast}(M,\R)$ and $H_{\partial}^{\ast}(M)\cong H^{\ast}(M,\C)$. 
\end{itemize}

\end{theorem}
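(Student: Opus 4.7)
The plan is to deduce all three assertions as formal consequences of the $dd^{c}$- and $\partial\bar\partial$-lemmas already stated in Proposition \ref{DDc}. Since the $d^{c}$-statements and the $\partial$-statements are proved by literally parallel arguments (simply replace $(d,d^{c})$ by $(\partial,\bar\partial)$ and use the corresponding lemma), I organise the sketch around the $d^{c}$-case only.

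First I would establish the third bullet, the vanishing of the induced differential $d$ on $H^{*}_{d^{c}}(M)$, because it is the cleanest and will feed into the second bullet. Given $\alpha\in \ker d^{c}$, one has $d^{c}(d\alpha)=-dd^{c}\alpha=0$, so $d\alpha\in \ker d^{c}\cap \mathrm{im}\, d$. The $dd^{c}$-lemma then gives $d\alpha\in \mathrm{im}\, dd^{c}\subseteq \mathrm{im}\, d^{c}$, so $d\alpha$ is zero in the quotient $\ker d^{c}/\mathrm{im}\, d^{c}=H^{*}_{d^{c}}(M)$. Hence $d$ vanishes on $H^{*}_{d^{c}}(M)$, and in particular $H^{*}(H^{*}_{d^{c}}(M),d)=H^{*}_{d^{c}}(M)$.

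Next I would prove the first bullet, that $\ker d^{c}\hookrightarrow A^{*}(M)$ is a $d$-quasi-isomorphism. For surjectivity on cohomology, pick a $d$-closed $\alpha\in A^{*}(M)$; then $d^{c}\alpha\in \ker d\cap \mathrm{im}\, d^{c}$, so the $dd^{c}$-lemma yields $f$ with $d^{c}\alpha=dd^{c}f$, whence $\alpha-df$ is $d$-closed, $d$-cohomologous to $\alpha$, and lies in $\ker d^{c}$. For injectivity, if $\alpha\in \ker d^{c}$ equals $d\beta$, then $dd^{c}\beta=-d^{c}d\beta=-d^{c}\alpha=0$, so $d^{c}\beta\in \ker d\cap \mathrm{im}\, d^{c}$ and hence $d^{c}\beta=dd^{c}\eta$ by the same lemma; the modified primitive $\beta+d\eta$ then lies in $\ker d^{c}$ and still satisfies $d(\beta+d\eta)=\alpha$.

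Finally, for the second bullet, I would show that the quotient $\pi\colon \ker d^{c}\to H^{*}_{d^{c}}(M)$ is a $d$-quasi-isomorphism. Surjectivity on cohomology is immediate from the vanishing already proved: any $\alpha\in \ker d^{c}$ satisfies $d\alpha=dd^{c}\gamma$, so $\alpha-d^{c}\gamma$ is simultaneously $d$- and $d^{c}$-closed and has the same image as $\alpha$ under $\pi$. For injectivity, suppose $\alpha\in \ker d\cap \ker d^{c}$ is of the form $d^{c}\beta$; then $\alpha\in \ker d\cap \mathrm{im}\, d^{c}=\mathrm{im}\, dd^{c}$, so $\alpha=dd^{c}\eta=d(d^{c}\eta)$ with $d^{c}\eta\in \ker d^{c}$, exhibiting $\alpha$ as a $d$-boundary inside $\ker d^{c}$. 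Combining the three steps yields $H^{*}_{d^{c}}(M)\cong H^{*}(M,\R)$, and the $\partial$-version gives $H^{*}_{\partial}(M)\cong H^{*}(M,\C)$ by an identical argument. I do not expect a serious obstacle: the whole proof is a mechanical exploitation of Proposition \ref{DDc}, and the only bookkeeping care needed is to verify at each step that the auxiliary forms produced actually lie in $\ker d^{c}$.
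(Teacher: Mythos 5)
Your proof is correct and is essentially the standard argument of \cite[Section 6]{DGMS}, which is exactly what the paper relies on here (it states the theorem with a reference and gives no independent proof). One harmless sign slip: with the convention $dd^{c}=-d^{c}d$, if $d^{c}\alpha=dd^{c}f$ then the $d^{c}$-closed representative in the first bullet is $\alpha+df$ rather than $\alpha-df$ (equivalently, replace $f$ by $-f$), just as your own injectivity step uses $\beta+d\eta$.
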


We first construct the canonical $1$-minimal models $\phi:{\mathcal M}^{\ast}\to A^{\ast}(M)$ and $\psi:{\mathcal N}^{\ast}\to A^{\ast}(M)\otimes\C$.

\begin{construction}\label{ddc}
We construct ${\mathcal M}_{i}^{\ast}=\bigwedge ( \V_{1}\oplus \dots \oplus \V_{i})$ and the map $\phi_{i}:{\mathcal M}_{i}^{\ast}\to A^{\ast}(M)$ by the following way.
\begin{enumerate}
\item ${\mathcal V}_{1}={\mathcal H}^{1}(M)={\rm ker}d\cap {\rm ker}d^{c}\cap A^{1}(M)$. Define $\phi_{1}:\bigwedge  \V_{1}\to A^{\ast}(M)$ so that $\phi$ is the natural injection ${\mathcal V}_{1}={\mathcal H}^{1}(M)\hookrightarrow A^{1}(M)$ on $\V_{1}$.
\item Define $\V_{2}$ as the kernel of the cup product
\[ \bigwedge^{2} \V_{1}\to H^{2}(M,\R).
\]
Define the DGA ${\mathcal M}^{\ast}_{2}=\bigwedge (\V_{1}\oplus \V_{2})$ with the differential $d$ so that $d$ is $0$ on $\V_{1}$ and $d$ on $\V_{2}$ is the natural inclusion $\V_{2}\hookrightarrow  \bigwedge^{2} \V_{1}$.
Extend the morphism $\phi_{2}:{\mathcal M}^{\ast}_{2}\to A^{\ast}(M)$ so that  $\phi_{2}(v)=d^{c}F^{c}(\phi_{1}(dv))$ for $v\in \V_{2}$.
\item 
For $i\ge 2$, consider the DGA ${\mathcal M}^{\ast}_{i}=\bigwedge(\V_{1}\oplus \V_{2}\oplus\dots\oplus \V_{i})$ with the homomorphism $\phi_{i}:{\mathcal M}^{\ast}_{i}\to A^{\ast}(M)$ we have constructed.
We have  $\phi_{i}(v)\in {\rm im} d^{c}$ for $v\in \V_{2}\oplus\dots\oplus \V_{i}$.

Let
\[\V_{i+1}={\rm ker} d_{ \vert \sum_{j+k=i+1} \V_{j}\wedge \V_{k}}.\]
 Define the extended DGA ${\mathcal M}^{\ast}_{i+1}={\mathcal M}^{\ast}_{i}\otimes \bigwedge \V_{i+1}$
so that the differential $d$ is defined on $\V_{i+1}$ as the natural inclusion $\V_{i+1}\hookrightarrow \sum_{j+k=i+1} \V_{j}\wedge \V_{k}$.
Extend the morphism $\phi_{i+i}:{\mathcal M}^{\ast}_{i+1}\to A^{\ast}(M)$ is defined by 
$\phi_{i+1}(v)=d^{c}F^{c}(\phi_{i}(dv))$ for $v\in \V_{i+1}$.
\end{enumerate}

We have the grading ${\mathcal M}^{\ast}=\bigoplus {\mathcal M}^{\ast}(n)$ which commutes with the multiplication and the differential operator 
so that ${\mathcal M}^{1}(i)=\V_{i}$.
Define the filtration  $W_{\ast}$ on  ${\mathcal M}^{\ast}$ by $W_{i}({\mathcal M}^{\ast})=\bigoplus_{n\le i} {\mathcal M}^{\ast}(n)$.
\end{construction}

\begin{construction}\label{ddb}
We construct ${\mathcal N}_{i}^{\ast}=\bigwedge ( \W_{1}\oplus \dots \oplus \W_{i})$ and the map $\psi_{i}:{\mathcal N}_{i}^{\ast}\to A^{\ast}(M)\otimes \C$ by the following way.
\begin{enumerate}
\item ${\mathcal W}_{1}={\mathcal H}^{1}(M)\otimes \C={\rm ker}\partial\cap {\rm ker}\bar\partial\cap A^{1}(M)\otimes\C$. Define $\psi_{1}:\bigwedge  \W_{1}\to A^{\ast}(M)\otimes \C$ so that $\psi_{1}$ is the natural injection ${\mathcal V}_{1}={\mathcal H}^{1}(M)\hookrightarrow A^{1}(M)\otimes\C$ on $\W_{1}$.
\item Define $\W_{2}$ as the kernel of the cup product
\[ \bigwedge^{2} \W_{1}\to H^{2}(M,\C).
\]
Define the DGA ${\mathcal N}^{\ast}_{2}=\bigwedge (\W_{1}\oplus \W_{2})$ with the differential $d$ so that $d$ is $0$ on $\W_{1}$ and $d$ on $\W_{2}$ is the natural inclusion $\W_{2}\hookrightarrow  \bigwedge^{2} \W_{1}$.
Extend the morphism $\psi_{2}:{\mathcal N}^{\ast}_{2}\to A^{\ast}(M)\otimes \C$ so that  $\psi_{2}(w)=\partial F^{\prime}(\psi_{1}(dw))$ for $w\in \W_{2}$.
\item 
For $i\ge 2$, consider the DGA ${\mathcal N}^{\ast}_{i}=\bigwedge(\W_{1}\oplus \W_{2}\oplus\dots\oplus \W_{i})$ with the homomorphism $\psi_{i}:{\mathcal N}^{\ast}_{i}\to A^{\ast}(M)\otimes\C$ we have constructed.
We have  $\psi_{i}(w)\in {\rm im} \partial$ for $w\in \W_{2}\oplus\dots\oplus \W_{i}$.

Let
\[\W_{i+1}={\rm ker} d_{ \vert \sum_{j+k=i+1} \W_{j}\wedge \W_{k}}.\]
 Define the extended DGA ${\mathcal N}^{\ast}_{i+1}={\mathcal N}^{\ast}_{i}\otimes \bigwedge \W_{i+1}$
so that the differential $d$ is defined on $\W_{i+1}$ as the natural inclusion $\W_{i+1}\hookrightarrow \sum_{j+k=i+1} \W_{j}\wedge \W_{k}$.
Extend the morphism $\psi_{i+i}:{\mathcal N}^{\ast}_{i+1}\to A^{\ast}(M)\otimes\C$ is defined by 
$\psi_{i+1}(w)=\partial F^{\prime}(\psi_{i}(dw))$ for $w\in \W_{i+1}$.
\end{enumerate}

We have the grading ${\mathcal N}^{\ast}=\bigoplus {\mathcal N}^{\ast}(n)$ which commutes with the multiplication and the differential operator 
so that ${\mathcal N}^{1}(i)=\W_{i}$.
For the decomposition ${\mathcal H}^{1}(M)\otimes \C={\mathcal H}^{1,0}(M)\oplus {\mathcal H}^{0,1}(M)$,
we define $\W^{1,0}={\mathcal H}^{1,0}(M)$, $\W^{0,1}={\mathcal H}^{0,1}(M)$ and 
\[\W^{P,Q}={\rm ker} d_{ \vert \sum_{s+u=P,t+v=Q} \W^{s,t}\wedge \W^{u,v}}\]
inductively.
Then we have $\W_{n}=\bigoplus_{P+Q=n} \W^{P,Q}$.
By this, we have the bigrading ${\mathcal N}^{\ast}=\bigoplus ({\mathcal N}^{\ast})^{P,Q}$ which commutes with the multiplication and the differential operator 
so that $({\mathcal N}^{1})^{P,Q}=\W^{P,Q}$.
\end{construction}

Consider the $\mathbb R$-mixed Hodge diagram
$\mathcal{D}(M)=\{(A^{*}(M), W_{*})\to^{\rm id}(A^{\ast}(M)\otimes \C, W_{*},F^{*})\}$ as the last subsection.
By the $1$-quasi-isomorphism $\phi: ({\mathcal M}^{\ast},W_{\ast})\to (A^{\ast}, W_{\ast})$, $ ({\mathcal M}^{\ast},W_{\ast})$ is the filtered $1$-minimal model of the filtered DGA $(A^{\ast}, W_{\ast})$.
By the diagram 
 \[\xymatrix{
A^{\ast}(M)\otimes \C&\ar[l]_{\rm id}A^{\ast}(M)\otimes \C\ar[r]^{\overline{\rm id}}&\overline{A^{\ast}(M)\otimes \C}\\
&{\mathcal N}^{\ast}\ar[lu]^{\psi} \ar[u]^{\psi}\ar[ru]_{\bar\psi}
},
\]
$\mathcal N=\bigoplus_{0\le P,Q}({\mathcal N}^{\ast})^{P,Q}$ is the bigraded $1$-minimal model of $\mathcal{D}(M)$.

Thus, constructing as in  Section \ref{IHH} for the augmented $\mathbb R$-mixed Hodge diagram $(\mathcal{D}(M), \epsilon_{x},\epsilon_{x})$, we obtain the  explicit  Morgan mixed hodge structure associated with a compact K\"ahler manifold $M$ with a point $x\in M$.

\subsection{Computations}
We consider an explicit object in  $VMHS^{u}({\mathcal D}(M))$.
Let $\omega_{1},\omega_{2}\in {\mathcal H}^{1,0}(M)$ such that $\omega_{1}\wedge\overline\omega_{2}$ is null-cohomologous.
Consider real vector spaces $V_{0}$, $V_{1}$ and $V_{2}$ such that 
$V_{0}=\langle v_{0,0}\rangle $, $V_{1\C}=\langle v_{1,0}, v_{0,1}\rangle $ with $\overline{v_{1,0}}=v_{0,1}$ and $V_{2}=\langle v_{1,1}\rangle $.
Consider the direct sum $V=V_{0}\oplus V_{1}\oplus V_{2}$ with the $\R$-split $\R$-mixed Hodge structure $(W_{\ast}, F^{\ast})$ associated with the bigrading $V_{\C}^{i,j}=\langle v_{i,j}\rangle $. 
Write $(V_{\C}^{i,j})^{\ast}=\langle w_{-i,-j}\rangle$.
Let $\alpha_{1}=\omega_{1}\otimes w_{-1,0}\otimes v_{0,0}+\overline\omega_{1}\otimes w_{0,-1}\otimes v_{0,0}  \in {\mathcal H}^{1}(M)\otimes {\rm Hom}(V_{1},V_{0})$ and  $\alpha_{2}=\overline\omega_{2}\otimes w_{-1,-1}\otimes v_{1,0}+\omega_{2}\otimes w_{-1,-1}\otimes v_{0,1} \in {\mathcal H}^{1}(M)\otimes {\rm Hom}(V_{2},V_{1})$.
By the $dd^{c}$-Lemma (Proposition \ref{DDc}), we have $\beta\in A^{0}(M)_{\C}$ such that 
\[\omega_{1}\wedge \overline\omega_{2}=dd^{c}\gamma.
\]
We notice that $\gamma$ is determined up to an additive constant.
Associated with a point   $x\in M$,
we  take $\beta$ satisfying $\gamma(x)=0$.
Let $\beta=(\gamma +\overline\gamma)\otimes w_{-1,-1}\otimes v_{0,0}$.
Let $\omega=\left(
\begin{array}{ccc}
0& \alpha_{1}&d^{c}\beta  \\
0&    0 &\alpha_{2}\\
0&0&0 
\end{array}
\right)$, $\omega^{\prime}=\left(
\begin{array}{ccc}
0& \alpha_{1}&-2\sqrt{-1}\partial\beta  \\
0&    0 &\alpha_{2}\\
0&0&0 
\end{array}
\right)$ and $a_{\beta}=\left(
\begin{array}{ccc}
1& 0&-\sqrt{-1}\beta  \\
0&    1 &0\\
0&0&1 
\end{array}
\right)$.
Then we can easily check \[ (V, W_{\ast} ,F^{\ast}, \omega,\omega^{\prime}, a_{\beta})\in {\rm Ob}(VMHS^{u}({\mathcal D}(M))).\]
Otherwise, by  Construction \ref{ddc}, we have $v\in {\mathcal V}_{2}\otimes {\rm Hom}(V_{2},V_{0})$ such that $\phi(v)=d^{c}\beta$.
Since $\mathcal I$ and $\mathcal H$ are the identity and the natural injection on ${\mathcal W}_{1}={\mathcal H}^{1}(M)\otimes \C$ respectively, we have \[\psi\circ {\mathcal I}^{-1}(v)=-2\sqrt{-1}\partial\beta\qquad {\rm and}\qquad \mathcal H\circ {\mathcal I}^{-1}(v)=-2\sqrt{-1}\partial\beta+\sqrt{-1}d\beta \otimes t +\sqrt{-1}\beta\otimes dt.\]
By this, for $\Omega=\left(
\begin{array}{ccc}
0& \alpha_{1}&v  \\
0&    0 &\alpha_{2}\\
0&0&0 
\end{array}
\right)$, we have $(\Omega, V,W_{\ast},F^{\ast})\in {\rm Ob}({\rm Rep}({\frak n}, W_{\ast},F^{\ast}))$ and 
\[\Phi_{C}((\Omega, V,W_{\ast},F^{\ast}))=(V, W_{\ast} ,F^{\ast}, \omega,\omega^{\prime}, a_{\beta}).\]

\subsection{Functoriality}\label{funcc}
Let $M_{1}$ and $M_{2}$ be compact K\"ahler manifolds. Let $f : M_{2}\to M_{1}$ be a holomorphic map.
By the pull-back $f^{\ast}:A^{\ast}(M_{1})\to A^{\ast}(M_{2})$, we have the  morphism between $\mathbb R$-mixed Hodge diagrams
$\mathcal{D}(M_{1})$ and $\mathcal{D}(M_{2})$.

Take the canonical $1$-minimal models $\,_{1}\phi:\,_{1}{\mathcal M}^{\ast}\to A^{\ast}(M_{1})$, $\,_{2}\phi:\,_{2}{\mathcal M}^{\ast}\to A^{\ast}(M_{2})$,
 $\,_{1}\varphi:\,_{1}{\mathcal N}^{\ast}\to A^{\ast}(M_{1})$ and $\,_{2}\varphi:\,_{2}{\mathcal N}^{\ast}\to A^{\ast}(M_{2},)$ as in the last subsection.
By the argument in \cite[Section 6.10]{Kasuya}, we can extend 
$f^{\ast}:A^{\ast}(M_{1})\to A^{\ast}(M_{2})$ to the maps 
$f_{\mathcal M}:\,_{1}{\mathcal M}^{\ast}\to \,_{2}{\mathcal M}^{\ast}$ and $f_{\mathcal N}:\,_{1}{\mathcal N}^{\ast}\to \,_{2}{\mathcal N}^{\ast}$
such that  $f^{\ast}\circ \,_{1}\phi=\,_{2}\phi \circ f_{\mathcal M}$, 
 $f^{\ast}\circ \,_{1}\varphi=\,_{2}\varphi \circ f_{\mathcal N}$
,$f_{\mathcal M}$  preserves gradings and  $f_{\mathcal N}$ preserves  the bigradings.

Let  $x_{1}\in M_{1}$ and $x_{2}\in M_{2}$.
Take the isomorphism $\,_{1}{\mathcal I}:\,_{1}{\mathcal N}^{\ast}\to  \,_{1}{\mathcal M}^{\ast}_{\C}$ (resp. $\,_{2}{\mathcal I}:\,_{2}{\mathcal N}^{\ast}\to  \,_{2}{\mathcal M}^{\ast}_{\C}$)   and 
the homotopy  $\,_{1}{\mathcal H}:\,_{1}{\mathcal N}^{\ast}\to  A^{\ast}(M_{1})_{\C}\otimes [t,dt]$ (resp. $\,_{2}{\mathcal H}:\,_{2}{\mathcal N}^{\ast}\to  A^{\ast}(M_{2})_{\C}\otimes [t,dt]$) associated with the  the augmented $\mathbb R$-mixed Hodge diagram $(\mathcal{D}(M_{1}), \epsilon_{x_{1}},\epsilon_{x_{1}})$ (resp.  $(\mathcal{D}(M_{2}), \epsilon_{x_{2}},\epsilon_{x_{2}})$) as in Section  \ref{IHH}.
\begin{proposition}[{\rm cf.  \cite[Proposition 6.11.2]{Kasuya}}]
If $f(x_{1})=x_{2}$, then 
\[\,_{2}{\mathcal I}\circ f_{\mathcal N}=f_{\mathcal M}\circ \,_{1}{\mathcal I}\]
 and \[\,_{2}{\mathcal H}\circ f_{\mathcal N}=(f^{\ast}\otimes {\rm id}_{[t,dt]})\circ \,_{1}{\mathcal H}.\]
\end{proposition}
\begin{proof}
By the assumption, we have $ \epsilon_{x_{2}}\circ f^{\ast}= \epsilon_{x_{1}}$.
For each  $C_{i}={\rm ker}\epsilon_{x_{i} \vert A^{0}(M)}$, we consider $\delta_{C_{i}}$ as in  Section  \ref{IHH}.
Then we have $f^{\ast}\circ \delta_{C_{1}}=\delta_{C_{2}}\circ f^{\ast}$.
By this we can prove the proposition inductively by the essentially same argument in the proof of \cite[Proposition 6.11.2]{Kasuya}.
\end{proof}

This Proposition says that Morgan mixed Hodge structures themselves are functorial in contrast to Remark \ref{funn}.

\subsection{$\R$-NMHS and Hain-Zucker's theorem}
Let $M$ be a compact complex manifold admitting  a K\"ahler metric $g$.
Then we can easily say that  $(VMHS^{u}_{\R}(M), \epsilon_{x}, \kappa^{\prime})$ is an $\mathbb R$-NMHS.
Consider the fundamental group $\pi_{1}(M,x)$ at $x\in M$.
Then, by using Chen's  iterated integral, Hain constructed  a canonical $\R$-mixed Hodge structure $(W_{\ast},F^{\ast})$ on the completion $\widehat{\R\pi_{1}(M,x)}=\varprojlim \R\pi_{1}(M,x)/J^{r+1}$ of the group ring $\R\pi_{1}(M,x)$ associated with powers $J^{r+1}$ of augmentation ideal $J$ depending on the choice of a point $x\in M$  (see \cite{HainI}).
We define the category ${\rm Rep}(\widehat{\R\pi_{1}(M,x)}, W_{\ast}, F^{\ast})$ 
such that:
\begin{itemize}
\item Objects are $(V,W_{\ast}, F^{\ast}, \sigma)$
so that
 \begin{itemize}
\item $(W_{\ast},F^{\ast})$ is a $\R$-mixed Hodge structure on a finite-dimensional $\R$-vector space $V$.
\item $\sigma: \widehat{\R\pi_{1}(M,x)}\to {\rm End}(V)$ is a module structure which is a morphism of $\R$-mixed Hodge structures.
\end{itemize}
\item Morphisms are morphisms of $\widehat{\R\pi_{1}(M,x)}$-modules which are  morphisms of $\R$-mixed Hodge structures.
\end{itemize}
Define  the functors $\tau^{\prime}_{1}: {\rm Rep}(\widehat{\R\pi_{1}(M,x)}, W_{\ast},F^{\ast})\to  {\mathcal MHS}_{\mathbb K}$ and $\tau^{\prime}_{2}:   {\rm Rep}(\widehat{\R\pi_{1}(M,x)}, W_{\ast},F^{\ast})\to {\rm Rep}({\frak n}, W_{\ast},F^{\ast})$ such that 
\[\tau_{1}^{\prime}: {\rm Ob}({\rm Rep}(\widehat{\R\pi_{1}(M,x)}, W_{\ast},F^{\ast}))\ni (V, W_{\ast}, F^{\ast}, \sigma)\mapsto (V, W_{\ast}, F^{\ast})\in  {\rm Ob}({\mathcal MHS}_{\mathbb K})\] 
and
 \[\tau_{2}: {\rm Ob}({\mathcal MHS}_{\mathbb K})\ni (V, W_{\ast}, F^{\ast})\to (V, W_{\ast}, F^{\ast},0)\in {\rm Ob}({\rm Rep}(\widehat{\R\pi_{1}(M,x)}, W_{\ast},F^{\ast})).\]
Then $({\rm Rep}(\widehat{\R\pi_{1}(M,x)}, W_{\ast},F^{\ast}),\tau^{\prime}_{1},\tau^{\prime}_{2})$ is an $\R$-NMHS.
Now, Hain-Zucker's theorem in \cite{HZ} can be stated as following (see \cite[Proposition 5.3]{Ara}).
\begin{theorem}[\cite{HZ}]\label{hazu}
The $\mathbb R$-NMHS $(VMHS^{u}_{\R}(M), \epsilon_{x}, \kappa^{\prime})$ is equivalent to the $\R$-NMHS
\[ ({\rm Rep}(\widehat{\R\pi_{1}(M,x)}, W_{\ast},F^{\ast}),\tau^{\prime}_{1},\tau^{\prime}_{2}).\] 
More precisely the monodromy representation functor defines an equivalence $h_{x}:VMHS^{u}_{\R}(M)\to {\rm Rep}(\widehat{\R\pi_{1}(M,x)}$ between tensor categories and there exists a quasi-inverse $q_{x}$ of $h_{x}$ such that  the diagram
\[\xymatrix{
VMHS^{u}_{\R}(M)\ar@<1.0 mm>[rr]^{\epsilon_{x}}\ar[d]^{h_{x}}&&\ar@<1.0mm>[ll]^{\kappa^{\prime}}{\mathcal MHS}_{\mathbb K}\ar[d]^{=}\\
({\rm Rep}(\widehat{\R\pi_{1}(M,x)}), W_{\ast},F^{\ast})\ar@<2.0 mm>[u]^{q_{x}}\ar@<1.0 mm>[rr]^{\tau^{\prime}_{1}}&&\ar@<1.0 mm>[ll]^{\tau^{\prime}_{2}}{\mathcal MHS}_{\mathbb K}
}
\]
commutes.
\end{theorem}

Hain and Zucker proved this type equivalence for graded-polarizable "good" unipotent VMHSs over quasi-K\"ahler manifolds.
Goodness is a condition at infinity and the  graded-polarizablity is only used for studying local monodromy near to normal crossing divisors.
Thus we may omit them on compact K\"ahler manifolds.

We can see  an explicit connection between Sullivan's $1$-minimal model and Chen's iterated integral at the level of complex geometry.
By Theorem \ref{hazu} and Theorem \ref{ppptNH}, we have the following result.

\begin{theorem}\label{nmht}
The $\R$-NMHS $({\rm Rep}({\frak n}, W_{\ast},F^{\ast}),\tau_{1},\tau_{2}) $ associated with a Morgan mixed Hodge structure of an augmented  $\R$-mixed-Hodge diagram  $(\mathcal{D}(M), \epsilon_{x},\epsilon_{x})$ 
 is  equivalent to
 the $\R$-NMHS $({\rm Rep}(\widehat{\R\pi_{1}(M,x)}, W_{\ast},F^{\ast}),\tau^{\prime}_{1},\tau^{\prime}_{2})$.

\end{theorem}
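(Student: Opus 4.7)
The plan is to assemble the desired equivalence as a composition of three equivalences already available in the excerpt and then verify that this composition intertwines the NMHS structure functors. By Theorem \ref{EQVMH} applied with $C=C_{x}$, the functor $\Phi_{C_{x}}:{\rm Rep}({\frak n},W_{\ast},F^{\ast})\to VMHS^{u}({\mathcal D}(M))$ is an equivalence. Under the identification $VMHS^{u}({\mathcal D}(M))=\widetilde{VMHS}^{u}_{\R}(M)$ from Subsection \ref{vmfun}, the functor $\Upsilon$ gives an equivalence to $VMHS^{u}_{\R}(M)$, and Theorem \ref{hazu} supplies the final equivalence $h_{x}:VMHS^{u}_{\R}(M)\to {\rm Rep}(\widehat{\R\pi_{1}(M,x)},W_{\ast},F^{\ast})$. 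The candidate functor is therefore $h_{x}\circ \Upsilon\circ \Phi_{C_{x}}$.

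Next, I would verify that this candidate intertwines the two pairs of structure functors. The commuting square established immediately before the theorem shows $\epsilon_{x}\circ \Upsilon\circ \Phi_{C_{x}}=\tau_{1}$ and $\Upsilon\circ \Phi_{C_{x}}\circ \tau_{2}=\kappa^{\prime}$ (the latter since the Maurer-Cartan forms and gauge element all vanish when the representation is trivial). Combined with the commuting square from Theorem \ref{hazu}, namely $\tau^{\prime}_{1}\circ h_{x}=\epsilon_{x}$ and $h_{x}\circ \kappa^{\prime}=\tau^{\prime}_{2}$, this immediately gives
\[
\tau^{\prime}_{1}\circ (h_{x}\circ \Upsilon\circ \Phi_{C_{x}})=\tau_{1},\qquad (h_{x}\circ \Upsilon\circ \Phi_{C_{x}})\circ \tau_{2}=\tau^{\prime}_{2},
\]
which is exactly what it means to be a morphism of $\R$-NMHS.

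Finally, to speak of equivalences of NMHS in the sense of the preceding subsection, one must check that the three building blocks are tensor equivalences. For $h_{x}$ this is part of Theorem \ref{hazu}, and for $\Upsilon$ it is clear from the definition of the tensor product of flat bundles and Hodge filtrations. For $\Phi_{C_{x}}$, given representations $(\Omega_{j},V_{j},W_{\ast},F^{\ast})$ for $j=1,2$, the images share the same $\omega=\phi(\Omega)$ and $\omega^{\prime}=\psi\circ {\mathcal I}^{-1}(\Omega)$ on the tensor product as on $V_{1}\otimes V_{2}$ with $\Omega=\Omega_{1}\otimes 1+1\otimes \Omega_{2}$; the gauge $a$ on the tensor product and $a_{1}\otimes a_{2}$ both satisfy the same defining ODE of Proposition \ref{gauge} with values in $C_{x}$, so by the uniqueness asserted there they agree. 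This gives the required tensor isomorphism.

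The main obstacle is really bookkeeping: one must be sure that the canonical isomorphism ${\mathcal I}$ and homotopy ${\mathcal H}$ built in Subsection \ref{IHH} from $C_{x}$ behave well under tensor products of representations, so that $\Phi_{C_{x}}$ is a tensor functor on the nose rather than merely up to an isomorphism one must then verify to be coherent. Once that uniqueness argument via Proposition \ref{gauge} is made explicit, everything else is a formal diagram chase using the two commuting squares described above, and the theorem drops out as a direct corollary of Theorems \ref{EQVMH} and \ref{hazu}.
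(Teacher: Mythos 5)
Your proposal is correct and takes essentially the same route as the paper: the paper's proof is precisely the composition $h_{x}\circ \Upsilon\circ \Phi_{C_{x}}$, with the compatibility $\epsilon_{x}\circ\Upsilon\circ\Phi_{C_{x}}=\tau_{1}$ coming from the observation that the gauge element satisfies $a\in {\rm Id}_{V}+C_{x}\otimes W_{-1}({\rm End}(V_{\C}))$, hence $a(x)={\rm Id}_{V}$, after which Theorem \ref{hazu} finishes the argument. Your additional remarks on tensor-functoriality of $\Phi_{C_{x}}$ (via the uniqueness in Proposition \ref{gauge}) only make explicit a point the paper leaves implicit.
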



\begin{thebibliography}{40}
\bibitem{Ara}D. Arapura,
The Hodge theoretic fundamental group and its cohomology. The geometry of algebraic cycles, 3--22, Clay Math. Proc., {\bf9}, Amer. Math. Soc., Providence, RI, 2010. 

\bibitem{Bu}J. I. Burgos, A ${\mathcal C}^{\infty}$-logarithmic Dolbeault complex. Compositio Math. {\bf 92} (1994), no. 1, 61--86.

\bibitem{Cir}J. Cirici,
Cofibrant models of diagrams: mixed Hodge structures in rational homotopy. Trans. Amer. Math. Soc. {\bf 367} (2015), no. 8, 5935--5970. 
\bibitem{CG}J. Cirici,  F. Guill\'en,
$E_{1}$-formality of complex algebraic varieties. Algebr. Geom. Topol. {\bf14} (2014), no. 5, 3049--3079. 
\bibitem{Del}
P. Deligne,  Th\'eorie de Hodge. II.  Inst. Hautes \'Etudes Sci. Publ. Math. No. {\bf 40} (1971), 5--57. 
\bibitem{DGMS}P. Deligne, P. Griffiths, J. Morgan, and D. Sullivan, Real homotopy theory of Kahler manifolds. Invent. Math. {\bf 29} (1975), no. 3, 245--274.
\bibitem{DM} P. Deligne, J. S. Milne, Tannakian Categories, Lecture Notes in Math. {\bf 900}, Springer-Verlag, Berlin, 1982, pp. 101--228.
\bibitem{GS}
P. Griffiths, W. Schmid,
Recent developments in Hodge theory: a discussion of techniques and results. Discrete subgroups of Lie groups and applicatons to moduli (Internat. Colloq., Bombay, 1973), pp. 31--127. Oxford Univ. Press, Bombay, 1975. 
\bibitem{GMo} P. Griffiths, J. Morgan,  Rational homotopy theory and differential forms. Second edition. Progress in Mathematics, {\bf 16}. Springer, New York, 2013.
\bibitem{HainI} R. M. Hain, The de Rham homotopy theory of complex algebraic varieties. I. K-Theory {\bf1} (1987), no. 3, 271--324.
\bibitem{HZ}R. M. Hain, S. Zucker, Unipotent variations of mixed Hodge structure. Invent. Math. {\bf 88} (1987), no. 1, 83--124.
\bibitem{Kasuya}
H. Kasuya,  Techniques of constructions of variations of mixed Hodge structures. Geom. Funct. Anal. {\bf28} (2018), no. 2, 393--442.

\bibitem{KAB} H. Kasuya, A differential geometric viewpoint of mixed Hodge structures.  Adachi, Toshiaki (ed.) et al., Contemporary perspectives in differential geometry and its related fields. Proceedings of the 5th international colloquium on differential geometry and its related fields, Veliko Tarnovo, Bulgaria, September 6--10, 2016. Hackensack, NJ: World Scientific
\bibitem{Le} L-C.  Lef\'evre,
Mixed Hodge structures and representations of fundamental groups of algebraic varieties. Adv. Math. {\bf349} (2019), 869--910.
\bibitem{Le2}
L.-C. Lef\'evre, Deformations of representations of fundamental groups of non-compact
complex varieties, preprint arXiv:1912.04787 (2019).


\bibitem{Man} M. Manetti, Deformation theory via differential graded Lie algebras. Algebraic Geometry Seminars, 1998--1999 (Italian) (Pisa), 21--48, Scuola Norm. Sup., Pisa, 1999.
\bibitem{Mor}
J. W. Morgan, The algebraic topology of smooth algebraic varieties. \textit{ Inst. Hautes \'Etudes Sci. Publ. Math.} No. {\bf 48} (1978), 137--204.
\bibitem{Morco} J. W. Morgan, Correction to "The algebraic topology of smooth algebraic varieties'' . Inst. Hautes \'Etudes Sci. Publ. Math. No. {\bf 64} (1986), 185.
\bibitem{NA}V. Navarro Aznar, Sur la th\'eorie de Hodge-Deligne. Invent. Math. {\bf 90} (1987), no. 1, 11--76. 
\bibitem{PS}
C. A. M. Peters and J. H. M. Steenbrink, Mixed Hodge structures, Ergebnisse der Mathe- matik und ihrer Grenzgebiete, 3. Fogle, A Series of Modern Surveys in Mathematics, vol. {\bf 52}, Springer, 2008.

\bibitem{Sim} C. T. Simpson, Higgs bundles and local systems. Inst. Hautes $ \rm \acute E$tudes Sci. Publ. Math. No. {\bf 75} (1992), 5--95. 

\bibitem{Sul} D. Sullivan, 
Infinitesimal computations in topology. Inst. Hautes $ \rm \acute E$tudes Sci. Publ. Math. No. {\bf 47} (1977), 269--331 (1978).



\end{thebibliography}
\end{document}